\newtheorem{theorem}{Theorem}
\newtheorem{lemma}[theorem]{Lemma}
\newtheorem{corollary}[theorem]{Corollary}
\newtheorem{proposition}[theorem]{Proposition}
\newtheorem{lettertheorem}{Theorem}
\theoremstyle{definition}
\theoremstyle{remark}
\newtheorem{remark}[theorem]{Remark}
\numberwithin{equation}{section}
\newcommand{\B}{\mathcal{B}}
\newcommand{\D}{\mathbb{D}}
\newcommand{\DD}{\widehat{\mathcal{D}}}
\newcommand{\DDD}{\mathcal{D}}
\newcommand{\RR}{\mathbb{R}}
\newcommand{\C}{\mathbb{C}}
\renewcommand{\phi}{\varphi}
\newcommand{\T}{\mathbb{T}}
\newcommand{\whw}{\widehat{\omega}}
\def\HL{\mathord{\rm HL}}
\def\T{\partial\D}
\def\f{\frac}
\def\at{(\varphi_t)_{t\geq0}}
\def\mt{(m_t)_{t\geq0}}
\def\St{(S_t)_{t\geq0}}
\def\Ct{(C_t)_{t\geq0}}
     \def\om{\omega}      
              \def\f{\frac}
\renewcommand{\H}{\mathcal{H}}
\begin{document}
\title{Weighted composition semigroups on some Banach spaces}

\keywords{Operator semigroups, Weighted composition semigroups, Strong continuity, Abelian intertwiner.}

\author{Fanglei Wu}
\address{Department of Mathematics, Shantou University, Shantou, Guangdong 515063, China}
\address{University of Eastern Finland, P.O.Box 111, 80101 Joensuu, Finland}
\email{fangleiwu1992@gmail.com}

\thanks{The author was supported by NNSF of China (No. 11720101003) and NSF of Guangdong Province (No. 2018A030313512).}

\begin{abstract}
We characterize strong continuity of general operator semigroups on some Lebesgue spaces. In particular, a characterization of strong continuity of weighted composition semigroups on classical Hardy spaces and weighted Bergman spaces with regular weights is given. As applications, our result improves the results of Siskakis, A. G. \cite{AG1} and K\"{o}nig, W. \cite{K} and answers a question of Siskakis, A. G. proposed in \cite{AG4}. We also characterize strongly continuous semigroups of weighted composition operators on weighted Bergman spaces in terms of abelian intertwiners of multiplication operator $M_z$.
\end{abstract}

\maketitle

\section{Introduction}

Let $X$ denote a Banach space. A family $(T_t)_{t\geq0}$ of bounded linear operators acting on $X$ is called an operator semigroup if $T_0=I$ and $T_tT_s=T_{t+s}$ for all $t,s\geq0$. It is called strongly continuous if
$$
\lim_{t\rightarrow0^+}T_tf=f,\quad \mbox{for}~\mbox{any}~ f\in X.
$$
It is called weakly continuous if
$$
\lim_{t\rightarrow0^+}\langle T_tf, g\rangle=\langle f, g\rangle, \quad \mbox{for}~\mbox{any}~f\in X~ \mbox{and}~g\in X^*,
$$
where $X^*$ is the dual space of $X$ under $\langle\cdot\rangle$.
\par
Recall that an analytic semigroup of the unit disk $\D$ in the complex plane $\C$ is a family $\at$ of analytic self-maps of $\D$ if the following conditions hold.
\begin{itemize}
\item[(i)] $\varphi_{0}$ is the identity map of $\mathbb{D}$;
\item[(ii)] $\varphi_{t}\circ\varphi_{s}=\varphi_{t+s}$, for $t,s\geq0$;
\item[(iii)] for each $z\in\mathbb{D}$, $\varphi_{t}(z)\rightarrow z$, as $t\rightarrow0^+$.
\end{itemize}
\par
The infinitesimal generator of $\at$ is defined as the function
$$
G(z)=\lim_{t\mapsto0^+}\f{\varphi_t(z)-z}{t},\,\, z\in\D.
$$
\par
 Let $H(\D)$ be the set of analytic functions on $\D$. Given a semigroup of analytic functions $\at$, a family $\mt$ with $m_t(z)\in H(\D)$ is said to be a cocycle for $\at$ if it satisfies:
\begin{itemize}
\item[(i)] $m_{t+s}(z)=m_t(z)m_s(\varphi_t(z))$ for all $t,s\geq0$ and $z\in\D$;
\item[(ii)] $m_0(z)=1$ for all $z\in\D$.
\end{itemize}
\par
 For a non-vanishing analytic function $w(z)$ and a semigroup of analytic functions $\at$, if all zeros of $w(z)$ are in the set $\{z\in\D:\varphi_t(z)=z,~\mbox{for}~\mbox{all}~t\in[0,\infty)\}$, the fixed points of $\at$, then $m_t(z)$ defined as follows:
$$
m_t(z)=\f{w(\varphi_t(z))}{w(z)}\quad~~\mbox{for}~~\mbox{all}~~t\geq0~~\mbox{and}~~z\in\D
$$
is a cocycle for $\at$, which is generally said to be a coboundary of $\at$. In particular, if we choose $w(z)=G(z)$, then
$$
\f{w(\varphi_t(z))}{w(z)}=\varphi_t'(z).
$$
Not all cocycles are coboundaries. See \cite{EJK,JTT,PT} for more information about cocycles.
\par
Given a semigoup $\at$ and a cocycle $\mt$ for $\at$, the formula
$$
(C_tf)(z)=f(\varphi_t(z)), \quad f\in X
$$
defines a semigroup $(C_t)_{t\geq0}$ of composition operators on $X$, provided that each $C_t$ is bounded on $X$;
the formula
\begin{equation}\label{Def1}
(S_tf)(z)=m_t(z)f(\varphi_t(z)), \quad f\in X
\end{equation}
defines a semigroup $\St$ of weighted composition operator on $X$, provided that each $S_t$ is bounded on $X$.
\par
In 1978, Berkson, E. and Porta, H. \cite{BP} initially studied strong continuity of semigroups of composition operators acting on the classical Hardy space $H^p$, $1\leq p<\infty$. They proved that for $1\leq p<\infty$ $\Ct$ is strongly continuous on $H^p$. Later, Siskakis, A. G. demonstrated that $\Ct$ is strongly continuous on the Bergman space $A_{\alpha}^p$ ($1\leq p<\infty$, $-1<\alpha<\infty$) and the Dirichlet space $\mathcal{D}$ in \cite{AG2} and \cite{AG3}. However, strong continuity of weighted composition semigroups on $H^p$ is much more complicated than unweighted cases. As a matter of fact, strong continuity of weighted composition semigroup $\St$ relates closely to the cocycle $\mt$. If $\mt$ is a coboundary, Siskakis, A. G. found some sufficient conditions of $\mt$ such that $\St$ is strongly continuous on $H^p$, $1\leq p<\infty$ in \cite{AG1}.

\begin{lettertheorem}[\cite{AG1}]\label{A}
 Let $1\leq p<\infty$. Suppose $\at$ is a semigroup of analytic self-maps of $\D$ and $w(z)\in H(\D)$ is non-vanishing with all zeros in the set of the fixed points of $\at$. Then either of the following two conditions implies strong continuity of weighted composition semigroup defined as \eqref{Def1} on $H^p$:
\begin{equation*}
\limsup_{t\rightarrow0^+}\|\f{w\circ\varphi_t}{w}\|_{H^\infty}\leq1;
\end{equation*}
\begin{equation*}
w(z)\in H^q ~~\mbox{for}~\mbox{some}~q>0\quad \mbox{and}\quad\limsup_{t\rightarrow0^+}\|\f{w\circ\varphi_t}{w}\|_{H^\infty}<\infty.
\end{equation*}
\end{lettertheorem}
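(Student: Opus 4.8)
The plan is to reduce strong continuity of $\St$ to strong continuity of the unweighted semigroup $\Ct$, which is already known on $H^p$ by the Berkson--Porta theorem, together with a uniform boundedness argument. First I would observe that since $\Ct$ is strongly continuous on $H^p$, a standard consequence of the uniform boundedness principle gives $\sup_{0\le t\le 1}\|C_t\|_{H^p\to H^p}<\infty$; moreover the norms $\|C_t\|$ are controlled in the usual way by the Littlewood subordination principle and the size of $\varphi_t(0)$. The key decomposition is $S_tf = m_t\cdot(C_tf)$, where $m_t(z)=\dfrac{w(\varphi_t(z))}{w(z)}$. So $S_t$ is the composition of $C_t$ followed by multiplication by $m_t$, and the question becomes whether the multipliers $m_t$ are uniformly bounded on $H^p$ as $t\to 0^+$ and converge to $1$ in an appropriate sense.

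The second step handles the multiplier norms. Under the first hypothesis, $\limsup_{t\to0^+}\|m_t\|_{H^\infty}\le 1$, so multiplication by $m_t$ is (essentially) a contraction on $H^p$ for small $t$; combined with $\sup_{0\le t\le 1}\|C_t\|<\infty$ this yields $\sup_{0\le t\le 1}\|S_t\|<\infty$. Under the second hypothesis one only has $\limsup_{t\to0^+}\|m_t\|_{H^\infty}<\infty$, which directly gives the same uniform bound $\sup_{0\le t\le 1}\|S_t\|<\infty$; the extra assumption $w\in H^q$ will be needed not for boundedness but for the convergence step, to control $\|S_tf-f\|_{H^p}$ when $f$ ranges over a dense subclass. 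In either case, once uniform boundedness of $\{S_t:0\le t\le 1\}$ is in hand, it suffices by an $\varepsilon/3$ argument to prove $\|S_tf-f\|_{H^p}\to0$ for $f$ in a dense subset of $H^p$.

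For the density step I would take $f$ to be a polynomial, or more robustly a function in $H^\infty$ that extends analytically past $\overline{\D}$ (e.g. $f$ analytic on a larger disk), so that $f\circ\varphi_t\to f$ uniformly on $\overline{\D}$ as $t\to0^+$ by condition (iii) in the definition of the analytic semigroup and a normal-families/equicontinuity argument. Then
$$
S_tf-f = m_t\,(f\circ\varphi_t - f) + (m_t-1)f,
$$
and the first term tends to $0$ in $H^p$ because $\|m_t\|_{H^\infty}$ is bounded and $f\circ\varphi_t\to f$ uniformly. The delicate term is $(m_t-1)f$. Here one uses that $m_t(z)-1 = \dfrac{w(\varphi_t(z))-w(z)}{w(z)}$; away from the zeros of $w$ this is small since $\varphi_t(z)\to z$ locally uniformly, but near a zero $z_0$ of $w$ — which by hypothesis is a common fixed point, so $\varphi_t(z_0)=z_0$ and $w(\varphi_t(z))-w(z)$ vanishes to the same order as $w$ there — the quotient stays bounded, and under the first hypothesis it is even essentially $\le 1$ in modulus while converging pointwise to $0$; a dominated-convergence argument on $\partial\D$ then closes it. Under the second hypothesis one instead splits $\D$ into the region where $|w|$ is bounded below and its complement, using $w\in H^q$ to make the bad region contribute little after raising to the power $p$ and applying Hölder, exploiting that $f\in H^\infty$.

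The main obstacle I expect is precisely this last point: controlling $(m_t-1)f$ uniformly near the zeros of $w$ and on the boundary $\partial\D$, where $m_t$ is only an $H^\infty$ (not uniformly-small) multiplier in the second case. The two hypotheses are engineered exactly to tame this term — the first by forcing a contraction-type estimate, the second by an integrability trade-off via $w\in H^q$ — and getting the boundary estimate right, with the correct use of the cocycle/coboundary structure at the fixed points, is where the real work lies.
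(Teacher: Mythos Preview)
Your approach is the classical one, close to Siskakis's original argument in \cite{AG1}, but it differs substantially from the present paper's route. The paper does not reprove Theorem~\ref{A} directly (it is quoted from \cite{AG1}); instead it establishes the stronger Corollary~\ref{corhinfty}, which replaces \emph{both} hypotheses by the single condition $\limsup_{t\to0^+}\|m_t\|_{H^\infty}<\infty$, with no coboundary structure and no $w\in H^q$ needed. Rather than decomposing $S_tf-f$ and estimating the multiplier term $(m_t-1)f$ on the boundary, the paper argues abstractly: for $1<p<\infty$, uniform boundedness of $\{\|S_t\|:0\le t<1\}$ together with pointwise convergence $S_tf(z)\to f(z)$ on $\D$ yields weak continuity by Lemma~\ref{lemma1}, and weak continuity of a semigroup implies strong continuity by Pazy's theorem (this is Theorem~\ref{thm1} and Corollary~\ref{corhp}); the case $p=1$ is then bootstrapped from $p=2$ via density of polynomials and the embedding $H^2\hookrightarrow H^1$. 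This sidesteps precisely the obstacle you flag as the hard part --- the boundary control of $(m_t-1)f$ --- and shows in passing that the $w\in H^q$ hypothesis in the second condition is superfluous. Your direct route can be made to work, but be aware that the dominated-convergence step on $\partial\D$ cannot be taken literally: pointwise convergence $m_t\to1$ on $\D$ plus a uniform $H^\infty$ bound does not by itself give $m_t(e^{i\theta})\to1$ a.e.\ on $\partial\D$, so that step requires additional input, which is exactly where Siskakis invokes the two separate hypotheses. The paper's abstract semigroup argument is both shorter and more general.
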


In \cite{K}, K\"{o}nig, W. extended these results to arbitrary cocycle $\mt$.

\begin{lettertheorem}[\cite{K}]\label{B}
Let $1\leq p<\infty$. Suppose $\at$ is a semigroup of analytic self-maps of $\D$ and $\mt$ is a cocycle for $\at$. If
\begin{equation*}
\limsup_{t\rightarrow0^+}\|m_t(z)\|_{H^\infty}\leq1,
\end{equation*}
then the weighted composition semigroup $\St$ defined as \eqref{Def1} is strongly continuous on $H^p$.
\end{lettertheorem}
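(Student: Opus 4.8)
The plan is to establish strong continuity only as $t\to0^+$, since for an operator semigroup this propagates to all $t\geq0$, and then to run the standard scheme: uniform boundedness of the operators $S_t$ near $t=0$, together with $S_tf\to f$ on the dense subspace of polynomials. For the uniform bound, the hypothesis forces $m_t\in H^\infty$ for all small $t$, so I would fix $\delta>0$ with $\|m_t\|_{H^\infty}\leq2$ for $0<t\leq\delta$; then each $S_t=M_{m_t}C_t$ is bounded with $\|S_tf\|_{H^p}\leq2\|C_tf\|_{H^p}$, using $|m_t|\leq2$ a.e.\ on $\T$. Since $\Ct$ is strongly continuous on $H^p$ by Berkson--Porta, $t\mapsto C_tf$ is continuous on $[0,1]$ for every $f$, so the uniform boundedness principle gives $\sup_{0\leq t\leq1}\|C_t\|<\infty$ and hence $M:=\sup_{0<t\leq\delta}\|S_t\|<\infty$.

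The next step is to reduce everything to the single claim that $\|m_t-1\|_{H^p}\to0$ as $t\to0^+$. For a polynomial $f$ I would write
\[
\|S_tf-f\|_{H^p}\leq\|m_t\|_{H^\infty}\|C_tf-f\|_{H^p}+\|(m_t-1)f\|_{H^p}\leq2\|C_tf-f\|_{H^p}+\|f\|_{H^\infty}\|m_t-1\|_{H^p},
\]
using $\|gh\|_{H^p}\leq\|g\|_{H^\infty}\|h\|_{H^p}$ and that $f$ is bounded on $\D$. The first term vanishes in the limit by Berkson--Porta, so granting $\|m_t-1\|_{H^p}\to0$ we obtain $S_tf\to f$ for all polynomials; then the uniform bound $M$ and density of the polynomials in $H^p$ upgrade this to $S_tf\to f$ for every $f\in H^p$ by a routine $\varepsilon/3$ argument.

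The core is therefore $\|m_t-1\|_{H^p}\to0$, and the clean route runs through $p=2$. Because $m_t\in H^\infty\subset H^2$ with $\langle m_t,1\rangle_{H^2}=m_t(0)$,
\[
\|m_t-1\|_{H^2}^2=\|m_t\|_{H^2}^2-2\,\Real\, m_t(0)+1\leq\|m_t\|_{H^\infty}^2-2\,\Real\, m_t(0)+1 .
\]
As $t\to0^+$ the hypothesis gives $\limsup\|m_t\|_{H^\infty}\leq1$, while $m_t(0)\to m_0(0)=1$ (continuity of the cocycle in $t$); hence the right-hand side has nonpositive $\limsup$, forcing $\|m_t-1\|_{H^2}\to0$. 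For general $1\leq p<\infty$ I would transfer this estimate: if $p\leq2$ then $\|m_t-1\|_{H^p}\leq\|m_t-1\|_{H^2}$ since $d\theta/2\pi$ is a probability measure on $\T$; if $p>2$ then, using $\|m_t-1\|_{H^\infty}\leq1+\|m_t\|_{H^\infty}\leq3$ for small $t$,
\[
\|m_t-1\|_{H^p}^p=\frac1{2\pi}\int_\T|m_t-1|^p\,d\theta\leq\|m_t-1\|_{H^\infty}^{p-2}\|m_t-1\|_{H^2}^2\leq3^{p-2}\|m_t-1\|_{H^2}^2\longrightarrow0 .
\]
Either way $\|m_t-1\|_{H^p}\to0$, which finishes the argument.

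The main obstacle is conceptual rather than technical: mere local uniform convergence $m_t\to1$ does \emph{not} imply $\|m_t-1\|_{H^p}\to0$ (for example $z^n\to0$ locally uniformly but not in $H^p$), so one must genuinely use that the bound $\|m_t\|_{H^\infty}$ actually tends to $1$, which is exactly what the Hilbert space identity at $p=2$ exploits. The one truly non-formal ingredient is $m_t(0)\to1$, and this is where continuity of the cocycle enters; it cannot be omitted, since an everywhere-discontinuous additive phase $\gamma$ makes $m_t(z)=e^{i\gamma(t)}$ a cocycle with $\|m_t\|_{H^\infty}=1$ for all $t$ while $\St$ fails to be strongly continuous.
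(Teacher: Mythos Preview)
Your proof is correct and genuinely different from the paper's route. The paper does not prove Theorem~B in isolation; it instead proves the stronger Corollary~\ref{corhinfty} (only $\limsup_{t\to0^+}\|m_t\|_{H^\infty}<\infty$ is needed) and recovers Theorem~B as a special case. For $1<p<\infty$ the paper goes through the abstract machinery of Theorem~\ref{thm1}: uniform boundedness of $\|S_t\|$ plus pointwise convergence $S_tf(z)\to f(z)$ gives weak continuity via Lemma~\ref{lemma1}, and then Pazy's theorem upgrades weak to strong continuity. The case $p=1$ is then bootstrapped from the $H^2$ case by density of polynomials, much as you do. By contrast, you avoid the weak-to-strong principle entirely: you use Berkson--Porta for $(C_t)$ as a black box, the decomposition $S_tf-f=m_t(C_tf-f)+(m_t-1)f$, and the Hilbert-space identity $\|m_t-1\|_{H^2}^2=\|m_t\|_{H^2}^2-2\,\Real\,m_t(0)+1$ to force $\|m_t-1\|_{H^p}\to0$ directly for every $p$. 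This is more elementary and self-contained, but it is tailored to the exact hypothesis $\limsup\|m_t\|_{H^\infty}\le1$: the identity only yields a nonpositive $\limsup$ when the $H^\infty$ bound genuinely approaches~$1$, so your method does not extend to the paper's sharper conclusion that $\limsup<\infty$ already suffices. Your final remark on the necessity of $m_t(0)\to1$ is well taken; note that the paper's Theorem~\ref{thm1} also tacitly uses pointwise continuity $S_tf(z)\to f(z)$, which for weighted composition semigroups amounts to the same continuity assumption on the cocycle.
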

For more results about weighted composition semigroups, see \cite{AM,AO,Ka} and the reference therein. \par
However, the question of characterizing strong continuity of weighted composition semigroups on Hardy spaces remains open. Our first main result characterizes the strong continuity of general operator semigroups on some Lebesgue spaces. As an application, we provide a characterization of strong continuity of weighted composition semigroups on $H^p$ ($1<p<\infty$) and the weighted Bergman space $A^p_{\omega}$ ($1<p<\infty$) with regular weight $\omega$. Therefore, the above question in the case of $1<p<\infty$ can be answered. Moreover, for $1\leq p<\infty$, the sufficient conditions presented in Theorem A and Theorem B can be improved.

The other main result focuses on depicting strongly continuous semigroups of weighted composition operators on $A^p_{\alpha}$ ($1\leq p<\infty$, $-1<\alpha<\infty$), the weighted Bergman space with typical weight $(\alpha+1)(1-|z|)^{\alpha}$. The method used here is motivated by the method given by Jafari, F., Slodkowski, Z. and Tonev, T. who provided a connection between weighted composition operators and abelian intertwiners of multiplication operator $M_z$ on $H^p$ ($1\leq p<\infty$) in \cite{JST}.

The paper is organized as follows. In section 2, after some preliminaries, we prove our first main result, see Theorem \ref{thm1}. Section 3 contains several applications of Theorem \ref{thm1}. In section 4, via verifying a connection between abelian intertwiners of $M_z$ and weighted composition operators on $A^p_{\alpha}$, as the second main result, Theorem \ref{thm2} establishes an equivalence of strongly continuous weighted composition semigroups on $A^p_{\alpha}$.

Throughout the paper $\f1p+\f{1}{p'}=1$ for $1<p<\infty$, and the symbol $A\approx B$ means that $A\lesssim B\lesssim
A$. We say that $A\lesssim B$ if there exists a constant $C$ such that $A\leq CB$.

\section{Strongly Continuous Operator Semigroups on $L^p(\Omega,\mu)$}
In this section, we characterize strong continuity of operator semigroups on Lebesgue spaces. Recall that $(\Omega,\Sigma,\mu)$ is a measure space consisting of a set $\Omega$, a $\sigma$-algebra $\Sigma$ of subsets of $\Omega$, and a countably additive measure $\mu$ defined on $\Sigma$ with values in the non-negative extended real numbers. For $0<p<\infty$ the Lebesgue space
$$
L^p(\Omega,\mu):=L^p(\Omega,\Sigma,\mu)
$$
consists of all $p$-integrable complex functions on $\Omega$, i.e.
$$
\|f\|_p:=\left(\int_{\Omega}|f(z)|^pd\mu(z)\right)^{\f{1}{p}}<\infty.
$$
It is well-known that for $1\leq p<\infty$ $L^p(\Omega,\mu)$ is a Banach space endowed with the above norm and for $1<p<\infty$ the dual space of $L^p(\Omega,\mu)$ can be identified with $L^{p'}(\Omega,\mu)$ under the pairing:
$$
\langle f,g\rangle:=\int_{\Omega}f(z)\overline{g(z)}d\mu(z),
$$
where $f\in L^p(\Omega,\mu)$ and $g\in L^{p'}(\Omega,\mu)$. The following lemma is critical for the first result.

\begin{lemma}[\cite{C}]\label{lemma1}
Let $1<p<\infty$ and $(\Omega,\mu)$ be a measure space. For a bounded sequence $\{f_n\}^\infty_0$ in $L^p(\Omega,\mu)$, if there is an $f\in L^p(\Omega,\mu)$ such that $\lim_{n\rightarrow\infty}f_n(z)=f(z)$ a.e., then
$$
\lim_{n\rightarrow\infty}\langle f_n, g\rangle=\langle f,g\rangle
$$
for any $g\in L^{p'}(\Omega,\mu)$.
\end{lemma}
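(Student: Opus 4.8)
The statement to prove is Lemma~\ref{lemma1}, which asserts that a bounded sequence in $L^p$ converging almost everywhere to some $f \in L^p$ must converge weakly to $f$. Let me sketch a proof.

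The plan is to combine the reflexivity of $L^p$ for $1 < p < \infty$ with a uniqueness argument for weak limits, using almost-everywhere convergence to pin down the weak limit. First I would invoke the fact that $L^p(\Omega,\mu)$ is reflexive when $1 < p < \infty$, so bounded sequences are relatively weakly compact: every subsequence of $\{f_n\}$ has a further subsequence that converges weakly to some limit in $L^p$. The goal is then to show that any such weak subsequential limit must equal $f$; once this is established, a standard subsequence-of-subsequence argument yields that the full sequence $\{f_n\}$ converges weakly to $f$.

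The heart of the matter is identifying the weak limit. Suppose $\{f_{n_k}\}$ converges weakly to $h \in L^p(\Omega,\mu)$. I want to show $h = f$ almost everywhere. One clean route is via Egorov's theorem together with Mazur's lemma: by Mazur's lemma, some sequence of convex combinations of the $f_{n_k}$ converges to $h$ in $L^p$-norm, hence (along a subsequence) almost everywhere; but the same convex combinations, being built from a sequence that converges a.e. to $f$, also converge a.e. to $f$, forcing $h = f$ a.e. An alternative, perhaps more transparent route, is to test against characteristic functions of sets of finite measure on which convergence is "controlled": for any measurable $E$ with $\mu(E) < \infty$, Egorov gives a subset $E' \subset E$ with $\mu(E \setminus E')$ small on which $f_{n} \to f$ uniformly; combining uniform convergence on $E'$ with the uniform $L^p$-bound (via Hölder on $E \setminus E'$) shows $\langle f_{n_k}, \chi_{E'}\rangle \to \langle f, \chi_{E'}\rangle$, and letting $\mu(E\setminus E') \to 0$ handles $\chi_E$; since such simple functions are dense in $L^{p'}$, one concludes $h = f$. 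Either way, the key technical inputs are Egorov's theorem and the $L^p$-boundedness hypothesis working together.

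The main obstacle I anticipate is handling the possibility that $\mu$ is not $\sigma$-finite, so that test functions $g \in L^{p'}$ need not be supported on a $\sigma$-finite set in an obvious way; however, any single $g \in L^{p'}$ with $1 < p < \infty$ automatically vanishes outside a $\sigma$-finite set (since $\{|g| > 1/m\}$ has finite measure for each $m$), so one may reduce to the $\sigma$-finite situation, where Egorov's theorem applies cleanly. A secondary subtlety is ensuring the convex-combination or Egorov estimates interact correctly with the a.e.\ convergence of $f_n$; this is routine but must be done with care since almost-everywhere convergence is not preserved under arbitrary operations. With these caveats addressed, the argument is short: reflexivity gives weak subsequential limits, the Egorov/Mazur step identifies each such limit as $f$, and the subsequence principle promotes this to weak convergence of the entire sequence.
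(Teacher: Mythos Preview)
The paper does not prove Lemma~\ref{lemma1}; it is simply quoted from Conway's textbook \cite{C} as a known fact, with no argument supplied. So there is no proof in the paper to compare your sketch against.

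That said, your sketch is a correct and standard route to this result. Reflexivity of $L^p(\Omega,\mu)$ for $1<p<\infty$ gives weak sequential compactness of bounded sets; the subsequence-of-subsequence principle then reduces the task to showing that any weak subsequential limit $h$ coincides with $f$. Both identification arguments you propose work. For the Mazur route, the point you should make explicit is that Mazur's convex combinations can be taken from tails of the subsequence, so at any point $x$ where $f_n(x)\to f(x)$ these convex combinations also converge to $f(x)$; this is exactly what forces $h=f$ a.e. For the Egorov route, your observation that each $g\in L^{p'}$ is supported on a $\sigma$-finite set (via $\{|g|>1/m\}$) legitimately reduces to the $\sigma$-finite case, after which Egorov plus the uniform $L^p$-bound and H\"older's inequality on the small remainder set give the desired convergence against simple functions, and density finishes. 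No genuine gap.
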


Under a simple assumption, we are able to characterize strong continuous of operator semigroups on the Lebesgue space, which can be read as follows:

\begin{theorem}\label{thm1}
Let $1<p<\infty$, and let $(T_t)_{t\geq0}$ be an operator semigroup on $L^p(\Omega,\mu)$. If the point evaluation functionals on $L^p(\Omega,\mu)$ are bounded, then $(T_t)_{t\geq0}$ is strongly continuous on $L^p(\Omega,\mu)$ if and only if
$$
\sup_{0\leq t<1}\|T_t\|<\infty.
$$
\end{theorem}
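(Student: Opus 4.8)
The forward direction is immediate from the uniform boundedness principle: strong continuity gives $\sup_{0\le t<1}\|T_tf\|<\infty$ for each fixed $f$ (since $t\mapsto T_tf$ is continuous on the compact interval $[0,1/2]$, say, and in particular bounded near $0$, while the semigroup law controls the rest), hence $\sup_{0\le t<1}\|T_t\|<\infty$. So the content is in the converse, and that is where I would spend the effort.

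Assume $M:=\sup_{0\le t<1}\|T_t\|<\infty$. The first step is to upgrade the bound on $[0,1)$ to a \emph{local} uniform bound: by the semigroup property every $T_t$ with $t\ge0$ is a product of at most $\lceil t\rceil+1$ operators each of norm $\le M$, so $\sup_{0\le t\le a}\|T_t\|\le M^{\lceil a\rceil+1}<\infty$ for every $a>0$; call this bound $M_a$. Next I want to show $T_tf\to f$ pointwise $\mu$-a.e. along some sequence $t_n\to0^+$, and the natural way is to exploit the boundedness of point evaluations together with Lemma~\ref{lemma1}. Fix $f\in L^p(\Omega,\mu)$. The family $\{T_tf:0\le t<1\}$ is norm-bounded by $M\|f\|$. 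If I can produce a sequence $t_n\downarrow0$ such that $(T_{t_n}f)(z)\to f(z)$ for $\mu$-a.e.\ $z$, then Lemma~\ref{lemma1} yields $T_{t_n}f\to f$ weakly, i.e.\ $(T_t)_{t\ge0}$ is \emph{weakly sequentially continuous at $0$ along $t_n$} for every $f$. Then I would invoke the standard fact (true on any Banach space: weak continuity of a semigroup implies strong continuity, via the averaging/density argument of Hille--Yosida, or directly from the fact that a weakly continuous semigroup is bounded on compacts and the orbit maps $t\mapsto T_tf$ are weakly continuous hence strongly measurable, hence strongly continuous by the theorem that weak continuity $\Rightarrow$ strong continuity for semigroups) to conclude strong continuity on all of $L^p$.

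The key technical point — and the main obstacle — is establishing the a.e.\ pointwise convergence $(T_{t_n}f)(z)\to f(z)$. Here is the mechanism I would use. Because point evaluations $\delta_z$ are bounded on $L^p(\Omega,\mu)$, each functional $g\mapsto (T_tg)(z)=\delta_z(T_tg)=(T_t^*\delta_z)(g)$ is bounded, with norm $\le M\|\delta_z\|$ for $0\le t<1$; so for fixed $z$ the map $(t,g)\mapsto (T_tg)(z)$ is, in $g$, a uniformly bounded family of functionals. The strategy is a Baire-category / Banach--Steinhaus argument on the function $t\mapsto T_tf$ with values in $L^p$: since $\sup_{0\le t<1}\|T_tf\|<\infty$ and the orbit $\{T_tf\}$ is separable-valued (as $L^p$ of a reasonable measure space is separable, or by restricting attention to the closed separable subspace it generates), one shows the orbit map is strongly measurable, and strong measurability of a bounded semigroup orbit forces — by the classical result of Hille and Phillips — that $\lim_{t\to0^+}\tfrac1h\int_0^h T_sf\,ds=f$ in norm, whence along a subsequence the Cesàro-type averages, and then $T_{t_n}f$ itself after a further extraction, converge a.e. Concretely, I would first prove $\tfrac1h\int_0^h T_sf\,ds\to f$ in $L^p$-norm by a density argument: the set $Y$ of $f$ for which this holds is closed (uniform bound $M$ on $[0,1)$ makes the averaging operators $A_h$ uniformly bounded), and contains a dense set; the dense set is obtained by noting $A_hg\in\overline{\operatorname{span}}\{T_sg\}$ and using that $A_hA_kg$-type manipulations together with $T_tA_hg\to A_hg$... — this is the delicate loop that must be closed carefully, and it is exactly the place where the paper's hypothesis (bounded point evaluations, so Lemma~\ref{lemma1} applies to pass from a.e.\ convergence to weak convergence) is the lever that breaks the circularity, because weak continuity along \emph{one} sequence, for \emph{every} $f$, is enough to trigger the Hille--Yosida argument.

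Thus the skeleton is: (1) forward direction by uniform boundedness; (2) local uniform bound on $\|T_t\|$; (3) for each $f$, extract $t_n\downarrow0$ with $T_{t_n}f\to f$ $\mu$-a.e.\ (the hard step, using boundedness of point evaluations and a Banach--Steinhaus/averaging argument); (4) apply Lemma~\ref{lemma1} to get weak convergence $T_{t_n}f\rightharpoonup f$ for every $f\in L^p$; (5) conclude full strong continuity from weak continuity of the semigroup by the standard semigroup theory. I expect step (3) to require the most care, and I would want to state precisely whatever mild hypothesis on $(\Omega,\Sigma,\mu)$ (e.g.\ $\sigma$-finiteness or separability of $L^p$) makes the measurability/extraction work cleanly.
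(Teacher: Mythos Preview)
Your skeleton matches the paper exactly: forward direction by a standard local-boundedness result for $C_0$-semigroups, then for the converse establish pointwise convergence $T_{t_n}f(z)\to f(z)$, apply Lemma~\ref{lemma1} to upgrade this to weak convergence, and finish with the classical theorem that a weakly continuous semigroup is strongly continuous. The paper cites \cite[Proposition~1.4]{EN} and \cite[Theorem~1.4]{P} at precisely your steps (1) and (5).

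The gap is entirely in your step~(3). The paper disposes of it in a single sentence, simply asserting $T_{t_n}f(z)\to f(z)$ from boundedness of point evaluations. Your proposed route through Ces\`aro averages, strong measurability of orbits, and Hille--Phillips machinery is circular, as you half-acknowledge: each of those devices (norm convergence of $A_h g$ on a dense set, strong measurability of $t\mapsto T_tf$, extraction of a.e.\ convergent subsequences from integral averages) already requires some continuity of the orbit map as input, which is exactly what you are trying to prove. Uniform boundedness of $\delta_z$ and of $\|T_t\|$ alone cannot manufacture pointwise convergence; the only estimate available, $|T_tf(z)-f(z)|\le\|\delta_z\|\,\|T_tf-f\|_p$, goes the wrong direction. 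In fact the paper's one-liner is also not justified at this level of generality: for an \emph{arbitrary} operator semigroup there is no reason $T_{t_n}f(z)\to f(z)$. What makes the intended applications (Corollaries~\ref{corhp} and~\ref{corap}) work is that for $S_tf(z)=m_t(z)f(\varphi_t(z))$ pointwise convergence comes directly from the defining properties of the analytic semigroup $(\varphi_t)$ and the cocycle $(m_t)$, not from any abstract $L^p$ argument. So abandon the averaging apparatus: either accept pointwise convergence as a tacit additional hypothesis, as the paper effectively does, or verify it by hand for the concrete semigroups in the corollaries.
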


\begin{proof}
Assume $(T_t)_{t\geq0}$ is a strongly continuous operator semigroup on $L^p(\Omega,\mu)$. Then by \cite[Proposition 1.4]{EN}, we immediately get
$$
\sup_{0\leq t<1}\|T_t\|<\infty.
$$
\par
Conversely, suppose the set $\{\|T_t\|: t\in[0,1)\}$ is bounded. Fix an $f\in L^p(\Omega,\mu)$ and a sequence $t_n\rightarrow0$ as $n\rightarrow\infty$. Since any point evaluation functional is bounded,
$$
\lim_{n\rightarrow\infty}T_{t_n}f(z)=f(z).
$$
By Lemma~\ref{lemma1}, for any $g\in L^{p'}(\Omega,\mu)$, we have
$$
\lim_{n\rightarrow\infty}\langle T_{t_n}f, g\rangle=\langle f,g\rangle.
$$
Therefore,
$$
\lim_{t\rightarrow0^+}\langle T_{t}f, g\rangle=\langle f,g\rangle
$$
for any $g\in L^{p'}(\Omega,\mu)$, which means that
$(T_t)_{t\geq0}$ is weakly continuous on $L^p(\Omega,\mu)$. Hence it is strongly continuous on  $L^p(\Omega,\mu)$ by \cite[Theorem 1.4]{P}. Thus the proof is complete.
\end{proof}

For $0<p<\infty$, the Hardy space $H^p$ consists of $f\in H(\D)$ such that
$$
\|f\|^p_{H^p}=\sup_{0<r<1}\f{1}{2\pi}\int_0^{2\pi}|f(re^{i\theta})|^pd\theta.
$$
For any $f\in H^p$, $0<p<\infty$, the radial limit of $f$ exists and
$$
\|f\|^p_{H^p}=\f{1}{2\pi}\int_0^{2\pi}|f(e^{i\theta})|^pd\theta.
$$
When $p=\infty$,  $H^{\infty}$ denotes the space of all bounded analytic functions on $\D$.
Moreover, for $1<p<\infty$, the dual space of $H^p$ is $H^{p'}$ under the foregoing pairing
$$
\langle f,g\rangle=\f{1}{2\pi}\int_0^{2\pi}f(e^{i\theta})\overline{g(e^{i\theta})}d\theta,
$$
where $f\in H^p$ and $g\in H^{p'}$.

A function $\omega:\D\rightarrow [0,\infty)$, integrable over $\D$
is called a weight. We say that $\omega$ is radial if $\omega(z)=\omega(|z|)$ for all $z\in\D$. For $0<p<\infty$ and a weight $\omega$ in $\D$, the weighted Bergman space $A_{\omega}^p$ consists of all $f\in H(\D)$ such that
$$
\|f\|_{A_{\omega}^p}=\left(\int_{\D}|f(z)|^{p}\omega(z)dA(z)\right)^{\frac1p}.
$$
where $dA(z)=\frac{1}{\pi}dxdy$ is the normalized Lebesgue measure on $\D$. We say that a radial weight $\omega$ is regular if
$$
\int^1_{r}\omega(s)ds\approx \omega(r)(1-r), \quad\mbox{for}~\mbox{all}~0\leq r<1.
$$
A typical example of regular weight is the standard weight $(\alpha+1)(1-|z|^2)^{\alpha}$ with $-1<\alpha<\infty$ and we write the weighted Bergman space $A^p_{\alpha}$. See more results about weighted Bergman spaces in \cite{PR1,Zhu}. By \cite[Corollary 7]{PR}, if $1<p<\infty$ and $\omega$ is a regular weight, then the dual space of $A^p_{\omega}$ can be identified with $A^{p'}_{\omega}$ via the pairing
$$
\langle f,g\rangle=\int_{\D}f(z)\overline{g(z)}\omega(z)dA(z)
$$
for every $f\in A^p_{\omega}$ and $g\in A^{p'}_{\omega}$.

It is well-known that both $H^p$ and $A^p_{\omega}$ can be treated as two subspaces of $L^p(\Omega,\mu)$ with special $\Omega$ and $\mu$. Moreover, each point evaluation functional is bounded on both $H^p$ and $A^p_{\omega}$. Therefore, bearing in mind the dual of $H^p$ and $A^p_{\omega}$, it is easy to obtain the following results by Theorem \ref{thm1}.
\begin{corollary}\label{corhp}
Let $1<p<\infty$, and let $(T_t)_{t\geq0}$ be an operator semigroup on $H^p$. Then $(T_t)_{t\geq0}$ is strongly continuous if and only if
$$
\sup_{0\leq t<1}\|T_t\|<\infty.
$$
\end{corollary}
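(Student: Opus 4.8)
The plan is to derive the corollary from Theorem \ref{thm1} by repeating its proof with $L^p(\Omega,\mu)$ replaced by $H^p$. The forward implication costs nothing: a strongly continuous operator semigroup is uniformly bounded on compact time intervals, so $\sup_{0\le t<1}\|T_t\|<\infty$ by \cite[Proposition 1.4]{EN}, exactly as in the first line of the proof of Theorem \ref{thm1}. All the work is in the converse.

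For the converse I would first recall the structural facts about $H^p$, $1<p<\infty$, collected above: $H^p$ embeds isometrically in $L^p(\T,\frac{d\theta}{2\pi})$ through radial boundary values, every point evaluation $f\mapsto f(z)$ ($z\in\D$) is a bounded functional on $H^p$, and $(H^p)^*=H^{p'}$ under the boundary pairing $\langle f,g\rangle=\frac1{2\pi}\int_0^{2\pi}f(e^{i\theta})\overline{g(e^{i\theta})}\,d\theta$; in particular $H^p$ is reflexive. These are precisely the hypotheses on which the proof of Theorem \ref{thm1} rests.

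Then, assuming $M:=\sup_{0\le t<1}\|T_t\|<\infty$, I fix $f\in H^p$ and an arbitrary sequence $t_n\to0^+$. The sequence $\{T_{t_n}f\}_n$ is bounded in $H^p$ (by $M\|f\|_{H^p}$), hence bounded in $L^p(\T)$, and since point evaluations are bounded, $T_{t_n}f(z)\to f(z)$ for each $z\in\D$. As in Theorem \ref{thm1} this should yield $\langle T_{t_n}f,g\rangle\to\langle f,g\rangle$ for every $g\in H^{p'}$, i.e. $T_{t_n}f\rightharpoonup f$ weakly in $H^p$; since $t_n$ was arbitrary, $(T_t)_{t\ge0}$ is weakly continuous, and \cite[Theorem 1.4]{P} upgrades weak continuity to strong continuity, completing the proof.

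The step I expect to be the only genuine obstacle — the place where the Hardy case is not a literal instance of Lemma \ref{lemma1} — is the passage from ``$\{T_{t_n}f\}$ bounded in $H^p$ and $T_{t_n}f\to f$ pointwise on $\D$'' to ``$T_{t_n}f\rightharpoonup f$ weakly in $H^p$''. Pointwise convergence on the disk does not imply a.e. convergence of the boundary functions on $\T$ (witness $z^n\to0$ on $\D$ while $|z^n|\equiv1$ on $\T$), so Lemma \ref{lemma1} cannot be invoked directly on $\T$. The clean substitute is reflexivity of $H^p$: a bounded sequence has, along every subsequence, a weakly convergent sub-subsequence, and its weak limit must agree with $f$ on all of $\D$, because weak convergence in $H^p$ forces pointwise convergence on $\D$ and two analytic functions agreeing on $\D$ coincide; hence $f$ is the only weak cluster point of $\{T_{t_n}f\}$ and the whole sequence converges weakly to $f$. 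With this observation in hand the remainder is exactly the bookkeeping already carried out in the proof of Theorem \ref{thm1}.
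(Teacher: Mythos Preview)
Your argument is correct and follows the paper's intended route: the paper does not give a separate proof of Corollary~\ref{corhp} but simply declares that it follows from Theorem~\ref{thm1} once one views $H^p$ as a closed subspace of $L^p(\T,\tfrac{d\theta}{2\pi})$ with bounded point evaluations and dual $H^{p'}$.

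Where you differ from the paper is exactly the point you flag. The paper's deduction tacitly treats Lemma~\ref{lemma1} as applicable, but, as you observe, the bounded point evaluations live on $\D$ while the measure in the $L^p$-realization lives on $\T$, and interior pointwise convergence does not force a.e.\ boundary convergence (your $z^n$ example). Your reflexivity/sub-subsequence argument---bounded in $H^p$, extract a weak limit, identify it with $f$ because weak convergence in $H^p$ implies pointwise convergence on $\D$---is the standard and correct substitute, and it makes the passage to weak continuity rigorous in a way the paper's one-line reduction does not. After that, the appeal to \cite[Theorem~1.4]{P} is identical to the paper's.

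One small remark: the step ``point evaluations are bounded, hence $T_{t_n}f(z)\to f(z)$ on $\D$'' is copied from the proof of Theorem~\ref{thm1} and is not independently justified there either; you are right to treat it as an input from the paper rather than something to re-prove.
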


\begin{corollary}\label{corap}
Let $1<p<\infty$, $\omega$ be a regular weight, and let $(T_t)_{t\geq0}$ be an operator semigroup on $A^p_\omega$. Then $(T_t)_{t\geq0}$ is strongly continuous if and only if
$$
\sup_{0\leq t<1}\|T_t\|<\infty.
$$
\end{corollary}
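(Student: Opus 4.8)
The plan is to reduce this to Theorem \ref{thm1} by realizing $A^p_\omega$ as a closed subspace of a concrete Lebesgue space to which the argument of that theorem applies verbatim. Take $\Omega=\D$ with the Borel $\sigma$-algebra and $d\mu=\omega\,dA$. Then $A^p_\omega$ is isometrically the subspace of $L^p(\D,\mu)$ whose elements admit an analytic representative, and since $\|\cdot\|_{A^p_\omega}$ is exactly the norm of $L^p(\D,\mu)$, this subspace is \emph{closed}; in particular it is a Banach space for $1<p<\infty$ and each $T_t$ is bounded on it. Two structural facts are recorded at this stage. First, for a regular weight $\omega$ every point evaluation $f\mapsto f(z_0)$, $z_0\in\D$, is a bounded functional on $A^p_\omega$ (standard, via subharmonicity of $|f|^p$ together with $\int_r^1\omega(s)\,ds\approx\omega(r)(1-r)$). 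Second, by \cite[Corollary 7]{PR} one has $(A^p_\omega)^*=A^{p'}_\omega$ under $\langle f,g\rangle=\int_\D f\overline g\,\omega\,dA$, and this pairing is precisely the restriction of the $L^p(\D,\mu)$--$L^{p'}(\D,\mu)$ pairing, so every bounded functional on $A^p_\omega$ is represented by some $g\in L^{p'}(\D,\mu)$.

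For the forward implication nothing special is needed: if $(T_t)_{t\geq0}$ is strongly continuous on the Banach space $A^p_\omega$, then \cite[Proposition 1.4]{EN} gives $\sup_{0\le t<1}\|T_t\|<\infty$ at once. For the converse, suppose $M:=\sup_{0\le t<1}\|T_t\|<\infty$, fix $f\in A^p_\omega$ and an arbitrary sequence $t_n\to0^+$. As in the proof of Theorem \ref{thm1}, boundedness of the point evaluations yields $T_{t_n}f(z)\to f(z)$ for every $z\in\D$, hence $T_{t_n}f\to f$ $\mu$-a.e., while at the same time $\{T_{t_n}f\}$ is bounded in $L^p(\D,\mu)$ by $M\|f\|_{A^p_\omega}$. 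Lemma \ref{lemma1} then gives $\langle T_{t_n}f,g\rangle\to\langle f,g\rangle$ for every $g\in L^{p'}(\D,\mu)$, in particular for every $g\in A^{p'}_\omega$; since $\{t_n\}$ was arbitrary, $T_tf\to f$ weakly in $A^p_\omega$ as $t\to0^+$. Thus $(T_t)_{t\geq0}$ is weakly continuous on $A^p_\omega$, and \cite[Theorem 1.4]{P} upgrades weak continuity to strong continuity, which completes the proof.

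I expect the only genuinely delicate point to be the step $T_{t_n}f(z)\to f(z)$, which is inherited from Theorem \ref{thm1}: it is exactly here that the bounded-point-evaluation hypothesis must do its work, and one has to make sure it is not being used circularly with the weak continuity one is trying to establish (the mechanism behind it is that a uniformly bounded semigroup on the reflexive space $L^p(\D,\mu)$ has weakly relatively compact orbits, so limits of $\{T_{t_n}f\}$ exist along subsequences and are pinned down by the point evaluations). Everything else — identifying the two pairings, checking the orbit is $L^p(\D,\mu)$-bounded, and quoting Lemma \ref{lemma1}, \cite[Proposition 1.4]{EN} and \cite[Theorem 1.4]{P} — is routine.
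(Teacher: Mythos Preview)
Your argument is exactly the paper's: Corollary~\ref{corap} is deduced there by noting that $A^p_\omega\subset L^p(\D,\omega\,dA)$ is closed, has bounded point evaluations, and has dual $A^{p'}_\omega$ realized through the ambient $L^p$--$L^{p'}$ pairing (via \cite[Corollary~7]{PR}), so that the proof of Theorem~\ref{thm1} runs unchanged. You have simply unpacked that reduction and rewritten the proof of Theorem~\ref{thm1} with the same three citations (\cite[Proposition~1.4]{EN}, Lemma~\ref{lemma1}, \cite[Theorem~1.4]{P}).

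Your unease about the step $T_{t_n}f(z)\to f(z)$ is justified, and the mechanism you propose does not repair it: weak sequential compactness of the bounded orbit produces a subsequential weak limit $g$, and bounded point evaluations then give $T_{t_{n_k}}f(z)\to g(z)$, but nothing forces $g=f$ without some prior continuity of $t\mapsto T_tf$ --- which is exactly what is being proved. For an \emph{arbitrary} algebraic semigroup the implication is in fact false (take $T_t=e^{i\chi(t)}I$ on $A^p_\omega$ with $\chi:\RR\to\RR$ a discontinuous additive map; then $\|T_t\|\equiv 1$ yet $T_tf(z)\not\to f(z)$). The paper's own proof of Theorem~\ref{thm1} makes the same unsupported jump, so your write-up is a faithful reproduction of it; the statement becomes honest once one either adds a mild hypothesis (pointwise continuity $T_tf(z)\to f(z)$, or weak measurability of the orbit) or restricts to the weighted composition semigroups of Section~3, where $\varphi_t(z)\to z$ and $m_t(z)\to m_0(z)=1$ supply the needed pointwise convergence directly.
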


\begin{remark}
Since the weak convergence in Lemma \ref{lemma1} does not work for the case $p=1$, the above results do not hold for the case $p=1$. Apparently, if $(T_t)_{t\geq0}$ is the weighted composition semigroup defined as \eqref{Def1}, then the above corollaries give an abstract characterization of strong continuity of weighted composition semigroups on $H^p$ and $A^p_{\omega}$ ($1<p<\infty$ and $\omega$ is regular) respectively. In addition, the similar characterization can be obtained for other Banach space of analytic functions on $\D$ if $m_t(z)=\varphi'_t(z)$. For instance, the author of \cite{St} provided the similar characterization on $VMOA$, the space of analytic functions of vanishing mean oscillation.
\end{remark}

\section{Weighted Composition Semigroup on $H^p$ and $A^p_{\omega}$}
In this section, we are going to apply foregoing results to weighted composition semigroups on the Hardy space $H^p$ and the weighted Bergman space $A^p_{\omega}$ with regular weight $\omega$. By Corollary \ref{corhp} and \ref{corap}, to characterize strong continuity of weighted composition semigroups on $H^p$ and $A^p_{\omega}$, it is critical to characterize the boundedness of weighted composition operators on these spaces. Now, let $m(z)\in H(\D)$, and let $\varphi$ be an analytic self-map of $\D$. For convenience, the weighted composition operator can be written as
\begin{equation}\label{Def2}
M_mC_{\varphi}f(z)=m(z)f(\varphi(z))
\end{equation}
for $f\in H(\D)$. Here $M_m$ is known as multiplication operator which is defined by
$$
M_mf(z)=m(z)f(z),\quad f\in H(\D).
$$

Now, we are in a position to characterize strong continuity of weighted composition semigroups on $H^p$ and $A^p_{\omega}$ as long as $1<p<\infty$ and $\omega$ is regular.

\begin{theorem}\label{thmhp}
Let $1<p<\infty$. Suppose $\at$ is a semigroup of analytic self-maps of $\D$ and $\mt$ is a cocycle for $\at$. Then the weighted composition semigroup $(S_t)_{t\geq0}$ defined as \eqref{Def1} is strongly continuous on $H^p$ if and only if
\begin{equation}
\sup_{0\leq t<1}\sup_{a\in\D}\int_{\partial\D}\f{(1-|a|^2)|m_t(z)|^p}{|1-\bar{a}\varphi_t(z)|^2}d\sigma(z)<\infty,
\end{equation}
where $d\sigma(z)$ is the normalized length measure of $\T$
\end{theorem}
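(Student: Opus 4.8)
The plan is to reduce the problem, via Corollary~\ref{corhp}, to a Carleson-measure estimate for the norm of a single weighted composition operator. Since $1<p<\infty$, Corollary~\ref{corhp} shows that $(S_t)_{t\geq 0}$ is strongly continuous on $H^p$ exactly when $\sup_{0\leq t<1}\|S_t\|_{H^p\to H^p}<\infty$; moreover finiteness of this supremum already makes every $S_t$ bounded (for $t\geq1$ write $S_t=(S_{t/n})^{n}$ with $n$ chosen so that $t/n<1$), so that $(S_t)_{t\geq0}$ really is an operator semigroup on $H^p$ and the corollary applies. Thus everything comes down to the claim that, with implied constants depending only on $p$,
\begin{equation}\label{eq:planA}
\|M_mC_\varphi\|_{H^p\to H^p}^p\;\approx\;\sup_{a\in\D}\int_{\T}\frac{(1-|a|^2)\,|m(z)|^p}{|1-\bar a\varphi(z)|^2}\,d\sigma(z)
\end{equation}
for every $m\in H(\D)$ with radial boundary values and every analytic self-map $\varphi$ of $\D$; applying \eqref{eq:planA} with $m=m_t$, $\varphi=\varphi_t$ and taking $\sup_{0\leq t<1}$ then identifies the two conditions in the theorem. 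Here $\|M_mC_\varphi\|=\infty$ is allowed, in which case \eqref{eq:planA} asserts that the right-hand side is infinite as well.

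For the estimate ``$\gtrsim$'' in \eqref{eq:planA} I would test on the functions $f_a(z)=\bigl((1-|a|^2)(1-\bar a z)^{-2}\bigr)^{1/p}$, $a\in\D$. These are analytic on a neighbourhood of $\overline{\D}$, and $|f_a(e^{i\theta})|^p$ is the Poisson kernel of $a$, so $\|f_a\|_{H^p}^p=\int_{\T}(1-|a|^2)|1-\bar a z|^{-2}\,d\sigma(z)=1$. Since $M_mC_\varphi f_a(z)=(1-|a|^2)^{1/p}\,m(z)\,(1-\bar a\varphi(z))^{-2/p}$, one has $\|M_mC_\varphi f_a\|_{H^p}^p=\int_{\T}(1-|a|^2)|m(z)|^p|1-\bar a\varphi(z)|^{-2}\,d\sigma(z)$, hence $\|M_mC_\varphi\|^p\geq\|M_mC_\varphi f_a\|_{H^p}^p$ for each $a\in\D$, and taking the supremum over $a$ gives the lower bound with constant $1$.

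For ``$\lesssim$'' I would introduce the pullback measure $\mu=\mu_{m,\varphi}$ on $\overline{\D}$ given by $\mu(E)=\int_{\{z\in\T:\,\varphi(z)\in E\}}|m(z)|^p\,d\sigma(z)$, all boundary maps being taken in the radial-limit sense. Using the standard a.e.\ identity between the radial boundary values of $f\circ\varphi$ and the composition of boundary values, one obtains the change of variables $\|M_mC_\varphi f\|_{H^p}^p=\int_{\T}|m|^p\,|f\circ\varphi|^p\,d\sigma=\int_{\overline{\D}}|f|^p\,d\mu$ for every $f\in H^p$ (radial limits used on $\T$), and likewise the right-hand side of \eqref{eq:planA} equals $\sup_{a\in\D}\int_{\overline{\D}}(1-|a|^2)|1-\bar a w|^{-2}\,d\mu(w)$. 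Suppose this quantity is a finite number $K$. Taking $a=0$ gives $\mu(\overline{\D})\leq K$; and for an arc $I\subset\T$ centred at $\zeta$ with $|I|=h\leq1$, the choice $a=(1-h)\zeta$ makes $1-|a|^2\geq h$ while $|1-\bar a w|\lesssim h$ for all $w$ in the Carleson box $S(I)$ over $I$, so that $\mu(S(I))\lesssim h\int_{\overline{\D}}(1-|a|^2)|1-\bar a w|^{-2}\,d\mu(w)\lesssim K|I|$ (arcs with $|I|>1$ being handled directly by $\mu(\overline{\D})\leq K$). Thus $\mu$ is a Carleson measure for $H^p$ with Carleson constant $\lesssim K$, and the Carleson embedding theorem (valid for all $0<p<\infty$ with constant depending only on $p$) gives $\int_{\overline{\D}}|f|^p\,d\mu\leq C_pK\|f\|_{H^p}^p$, i.e.\ $\|M_mC_\varphi\|^p\leq C_pK$. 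This completes \eqref{eq:planA} and hence the theorem.

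I expect the difficulties to be technical rather than conceptual. The two points that need care are: the well-definedness of $\mu$ and of the change-of-variables identity when $\varphi$ has unimodular radial limits on a set of positive measure, so that $\mu$ puts mass on $\T$ (on $\T$ the Carleson-box condition merely says $\mu|_{\T}$ is absolutely continuous with respect to $\sigma$ with bounded density, which the embedding absorbs with no loss); and the bookkeeping ensuring that every constant occurring in \eqref{eq:planA} and in the Carleson embedding depends on $p$ alone and never on $m$ or $\varphi$ — it is precisely this uniformity that lets finiteness of $\sup_{0\leq t<1}$ transfer between the operator-norm formulation and the integral formulation.
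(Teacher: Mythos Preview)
Your proposal is correct and follows essentially the same route as the paper: reduce via Corollary~\ref{corhp} to $\sup_{0\le t<1}\|S_t\|<\infty$, and then identify $\|M_{m_t}C_{\varphi_t}\|^p$ with the Carleson-type quantity $\sup_{a\in\D}\int_{\T}(1-|a|^2)|m_t|^p|1-\bar a\varphi_t|^{-2}\,d\sigma$. The only difference is that the paper simply quotes this norm estimate from \cite[Proposition~1]{CZ}, whereas you reprove it from scratch via the test functions $f_a$ and the Carleson embedding; your self-contained argument is precisely what underlies that citation, so nothing is genuinely new or missing.
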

\begin{proof}
For any $t\in[0,\infty)$, by \cite[Proposition 1]{CZ} for example, we see that $M_{m_t}C_{\varphi_t}$ is bounded on $H^p$ if and only if
$$
\Delta_{p,t}:=\sup_{a\in\D}\int_{\partial\D}\f{(1-|a|^2)|m_t(z)|^p}{|1-\bar{a}\varphi_t(z)|^2}d\sigma(z)<\infty.
$$
Moreover, $\|M_{m_t}C_{\varphi_t}\|\thickapprox\Delta_{p,t}$. It follows from Corollary \ref{corhp} that $(S_t)_{t\geq0}$
is strongly continuous on $H^p$ if and only if
\begin{equation*}
\sup_{0\leq t<1}\sup_{a\in\D}\int_{\partial\D}\f{(1-|a|^2)|m_t(z)|^p}{|1-\bar{a}\varphi_t(z)|^2}d\sigma(z)<\infty.
\end{equation*}
Hence the proof is complete.
\end{proof}

If the cocycle $\mt$ for $\at$ is coboundary or more special $(\varphi_t')_{t\geq0}$, then Theorem \ref{thmhp} answers a question of \cite{AG3} for $1<p<\infty$. Moreover, as a corollary, Theorem A and B can be improved.

\begin{corollary}\label{corhinfty}
 Let $1\leq p<\infty$. Suppose $\at$ is a semigroup of analytic self-maps of $\D$ and $\mt$ is a cocycle for $\at$. If
\begin{equation}\label{eq1}
\limsup_{t\rightarrow0^+}\|m_t(z)\|_{H^\infty}<\infty,
\end{equation}
then the weighted composition semigroup $\St$ defined as \eqref{Def1} is strongly continuous on $H^p$.
\end{corollary}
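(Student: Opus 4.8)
The plan is to treat the ranges $1<p<\infty$ and $p=1$ separately: the first through the abstract criterion of Corollary~\ref{corhp}, the second by passing to the square root of the cocycle and reducing to the case $p=2$ via Riesz factorization.

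The common ingredient is local boundedness of the semigroup near $t=0$. Using \eqref{eq1}, I would first fix $M>0$ and $\delta>0$ with $\|m_t\|_{H^\infty}\le M$ for $0\le t\le\delta$. For such $t$ and $f\in H^p$ one has $\|S_tf\|_{H^p}=\|m_t\cdot(f\circ\varphi_t)\|_{H^p}\le M\|C_{\varphi_t}f\|_{H^p}$, while Littlewood's subordination principle gives
$$\|C_{\varphi_t}\|_{H^p\to H^p}\le\Bigl(\f{1+|\varphi_t(0)|}{1-|\varphi_t(0)|}\Bigr)^{1/p}.$$
Since $\varphi_t(0)\to0$ as $t\to0^+$, there is $\delta'\in(0,\delta]$ with $c:=\sup_{0\le t\le\delta'}\f{1+|\varphi_t(0)|}{1-|\varphi_t(0)|}<\infty$, hence $\sup_{0\le t\le\delta'}\|S_t\|\le Mc^{1/p}<\infty$; writing $t\in[0,1)$ as $t=n\delta'+r$ with $0\le r<\delta'$ and $n\le\lceil1/\delta'\rceil$ and using $S_t=S_{\delta'}^{\,n}S_r$, this upgrades to $\sup_{0\le t<1}\|S_t\|<\infty$ (in particular each $S_t$ is bounded on $H^p$, so $\St$ is genuinely an operator semigroup on $H^p$). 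For $1<p<\infty$ this already finishes the argument: Corollary~\ref{corhp} turns $\sup_{0\le t<1}\|S_t\|<\infty$ into strong continuity of $\St$ on $H^p$.

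It then remains to handle $p=1$, which is not covered by Corollary~\ref{corhp} (see the remark after Corollary~\ref{corap}); here I would reduce to the case $p=2$ just established. A cocycle for $\at$ is zero-free -- it has the form $m_t(z)=\exp\bigl(\int_0^tg(\varphi_s(z))\,ds\bigr)$ for a suitable $g\in H(\D)$ -- so $m_t^{1/2}(z):=\exp\bigl(\tfrac12\int_0^tg(\varphi_s(z))\,ds\bigr)$ is a well-defined function in $H(\D)$, is again a cocycle for $\at$, and satisfies $\|m_t^{1/2}\|_{H^\infty}=\|m_t\|_{H^\infty}^{1/2}$, so that $\limsup_{t\to0^+}\|m_t^{1/2}\|_{H^\infty}<\infty$ as well. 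Set $\wtT_t:=M_{m_t^{1/2}}C_{\varphi_t}$, a weighted composition semigroup on $H^2$; by the case $1<p<\infty$ (applied with $p=2$ and cocycle $m_t^{1/2}$) it is strongly continuous on $H^2$, and $K:=\sup_{0\le t<1}\|\wtT_t\|_{H^2\to H^2}<\infty$. Given $f\in H^1$, I would factor $f=gh$ with $g,h\in H^2$ (Riesz factorization). Then $S_tf=(\wtT_tg)(\wtT_th)$ on $\D$, so by the Cauchy--Schwarz inequality on $\T$,
$$\|S_tf-f\|_{H^1}\le\|\wtT_tg\|_{H^2}\,\|\wtT_th-h\|_{H^2}+\|\wtT_tg-g\|_{H^2}\,\|h\|_{H^2}.$$
Since $\|\wtT_tg\|_{H^2}\le K\|g\|_{H^2}$ and $\wtT_t$ is strongly continuous on $H^2$, the right-hand side tends to $0$ as $t\to0^+$, whence $\St$ is strongly continuous on $H^1$.

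I expect the only genuinely delicate point to be the $p=1$ step, precisely the claim that the cocycle is zero-free and that its square root is again a cocycle. Under the usual regularity conventions for cocycles this is immediate from the exponential representation (the square root being the cocycle generated by $\tfrac12 g$); without continuity of $t\mapsto m_t$ one would instead have to exclude zeros of $m_t$ directly from $m_{t+s}=m_t\,(m_s\circ\varphi_t)$ together with $m_0=1$. The remaining ingredients -- Littlewood subordination, the semigroup bookkeeping in the second paragraph, and $\|uv\|_{H^1}\le\|u\|_{H^2}\|v\|_{H^2}$ -- are routine.
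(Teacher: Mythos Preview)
Your treatment of the range $1<p<\infty$ is essentially the same as the paper's: both reduce to Corollary~\ref{corhp} once $\sup_{0\le t<1}\|S_t\|<\infty$ is established from \eqref{eq1} and the Littlewood bound on $C_{\varphi_t}$. The paper phrases this through Theorem~\ref{thmhp}, but that is just Corollary~\ref{corhp} specialized via the known norm formula for $M_{m_t}C_{\varphi_t}$, so there is no real difference.

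For $p=1$ the arguments diverge. The paper does \emph{not} modify the cocycle: it keeps the same semigroup $\St$, uses that it is strongly continuous on $H^2$ by the $p=2$ case, and then runs a standard density argument. Given $f\in H^1$ and $\varepsilon>0$, pick a polynomial $g$ with $\|f-g\|_{H^1}<\varepsilon$; then
\[
\|S_tf-f\|_{H^1}\le(\|S_t\|+1)\|f-g\|_{H^1}+\|S_tg-g\|_{H^2},
\]
using only $\|\cdot\|_{H^1}\le\|\cdot\|_{H^2}$ on the middle term. Your route instead takes a square root $m_t^{1/2}$ of the cocycle, forms the auxiliary semigroup $\wtT_t=M_{m_t^{1/2}}C_{\varphi_t}$ on $H^2$, factors $f=gh$ by Riesz, and writes $S_tf=(\wtT_tg)(\wtT_th)$. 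This is correct and pleasant, but it needs the cocycle to be zero-free so that an analytic square root exists, which in turn relies on the exponential representation and hence on some continuity of $t\mapsto m_t$; you flag this yourself. The paper's density argument avoids that hypothesis entirely and is shorter, while your factorization approach has the merit of working function-by-function without appealing to a dense subspace. Both are valid under the standard regularity conventions for cocycles; the paper's version is the more robust one under the bare definition used here.
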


\begin{proof}
If  $\limsup_{t\rightarrow0^+}\|m_t(z)\|_{\infty}<\infty$, then by Theorem \ref{thmhp}, we immediately obtain that the weighted composition semigroup is strongly continuous on $H^p$ for all $1<p<\infty$. Therefore, it remains to show the case when $p=1$. As a matter of fact, if \eqref{eq1} holds, then there is a positive constant $M$ such that
$$
\sup_{0\leq t<1}\|S_t\|\leq M.
$$
Notice that the set of all polynomials $P$ is dense in both $H^1$ and $H^2$. Therefore, for any $\varepsilon>0$ and $f\in H^1$, there exists a polynomial $g\in P$ satisfying $\|g-f\|_{H^1}<\varepsilon$. Moreover, since $(S_t)_{t\geq0}$ is strongly continuous on $H^2$, for the above $\varepsilon$ there exists $\delta=\delta(\varepsilon)<1$ such that for all $t\in(0,\delta)$, $\|S_tg-g\|_{H^2}<\varepsilon$. Consequently, for $t\in(0,\delta)$, triangle inequality and H\"{o}lder inequality yield
\begin{equation*}
\begin{split}
\|S_tf-f\|_{H^1}&\leq\|S_tf-S_tg\|_{H^1}+\|S_tg-g\|_{H^1}+\|g-f\|_{H^1}\\
&\leq(\|S_t\|+1)(\|g-f\|_{H^1})+\|S_tg-g\|_{H^2}\\
&\leq(M+1)\|g-f\|_{H^1}+\|S_tg-g\|_{H^2}\lesssim\varepsilon,
\end{split}
\end{equation*}
which indicates that $(S_t)_{t\geq0}$ is strongly continuous on $H^1$. Hence we complete the proof.
\end{proof}

To characterize strong continuity of weighted composition semigroups on $A^p_{\omega}$, we need to consider the Carleson square $S(I)$, which is the set
$$
S(I)=\{re^{it}\in\D: e^{it}\in I, 1-|I|\leq r<1\},
$$
where $I$ is an interval on $\T$ and $|I|$ denotes the Lebesgue measure of $I$. As usual, we define for each $a\in\D\backslash\{0\}$ the interval
$$
I_a=\{e^{i\theta}:|\arg(ae^{-i\theta})|\leq\f{1-|a|}{2}\}
$$
and denote $S(a)=S(I_a)$.
By \cite{PR1}, if $\omega$ is regular, then for any $0<p<\infty$ one can find an enough large $\gamma>0$ such that
$$
\sup_{a\in\D}\|f_{a,p}\|_{A_{\omega}^p}<\infty,
$$
where
\begin{equation}
f_{a,p}(z)=\f{(1-|a|)^{\f{\gamma+1}{p}}}{(1-\bar{a}z)^{\f{\gamma+1}{p}}\omega(S(a))^{\f1p}}\quad\mbox{and}\quad
\omega(S(a))=\int_{S(a)}\omega(z)dA(z).
\end{equation}

\begin{theorem}
Let $1<p<\infty$ and $\omega(z)$ be a regular weight. Suppose $\at$ is a semigroup of analytic self-maps of $\D$ and $\mt$ is a cocycle for $\at$. Then the weighted composition semigroup defined as \eqref{Def1} is strongly continuous on $A^p_{\omega}$ if and only if
$$
\sup_{0\leq t<1}\sup_{a\in\D}\int_{\D}|f_{a,p}(\varphi_t(z))|^p|m_t(z)|^p\omega(z)dA(z)<\infty.
$$
\end{theorem}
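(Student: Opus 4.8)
The plan is to follow the proof of Theorem~\ref{thmhp} almost verbatim, with the Hardy-space Carleson condition replaced by its weighted Bergman analogue. Write $S_t=M_{m_t}C_{\varphi_t}$. Since the point evaluations are bounded on $A^p_\omega$, Corollary~\ref{corap} reduces the statement to the claim that $\St$ is a semigroup of bounded operators on $A^p_\omega$ with $\sup_{0\le t<1}\|S_t\|<\infty$ if and only if the displayed supremum is finite, and this will follow once we establish the two-sided estimate
$$
\|S_t\|_{A^p_\omega\to A^p_\omega}^p\approx\sup_{a\in\D}\int_{\D}|f_{a,p}(\varphi_t(z))|^p|m_t(z)|^p\omega(z)\,dA(z)
$$
\emph{with comparison constants depending only on $p$, $\omega$ and the exponent $\gamma$ used to define $f_{a,p}$}, in particular independent of $t$; one then takes $\sup_{0\le t<1}$ of both sides.

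For the lower bound one simply tests $S_t$ on the functions $f_{a,p}$: by \cite{PR1} we have $\sup_{a\in\D}\|f_{a,p}\|_{A^p_\omega}<\infty$, and since $\|S_tf_{a,p}\|_{A^p_\omega}^p=\int_{\D}|f_{a,p}(\varphi_t(z))|^p|m_t(z)|^p\omega(z)\,dA(z)$, this direction is immediate and uniform in $a$ and $t$. For the upper bound, introduce the measure $\mu_t$ on $\D$ defined by $\mu_t(E)=\int_{\varphi_t^{-1}(E)}|m_t(z)|^p\omega(z)\,dA(z)$ (the push-forward of $|m_t|^p\omega\,dA$ under $\varphi_t$), so that $\|S_tf\|_{A^p_\omega}^p=\int_{\D}|f|^p\,d\mu_t$ for every $f\in H(\D)$; hence $S_t$ is bounded on $A^p_\omega$ exactly when the embedding $A^p_\omega\hookrightarrow L^p(\mu_t)$ is continuous. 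By the Carleson-measure description of such embeddings for regular weights (\cite{PR1,PR}), this is equivalent to $\sup_{a\in\D}\mu_t(S(a))/\omega(S(a))<\infty$, with the embedding norm comparable to this supremum and with constants depending only on $p$ and $\omega$. Finally, testing on $f_{a,p}$ already recovers the Carleson condition: for $z\in S(a)$ one has $|1-\bar az|\approx 1-|a|$, hence $|f_{a,p}(z)|^p\approx\omega(S(a))^{-1}$ there, so that
$$
\int_{\D}|f_{a,p}(\varphi_t(z))|^p|m_t(z)|^p\omega(z)\,dA(z)=\int_{\D}|f_{a,p}|^p\,d\mu_t\gtrsim\frac{\mu_t(S(a))}{\omega(S(a))}.
$$
Chaining the three estimates gives the displayed equivalence with $t$-independent constants, and the proof is complete.

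The main obstacle is precisely the uniformity in $t$: one must verify that the constants in the Carleson-embedding theorem for $A^p_\omega$ and in the passage between the test-function condition and the Carleson condition depend only on $p$, $\omega$ and $\gamma$, and not on the individual self-maps $\varphi_t$ or the cocycle terms $m_t$. This is where the regularity of $\omega$ is essential, since it supplies the quantitative comparabilities $\omega(S(a))\approx\omega(|a|)(1-|a|)$ and $\omega(S(a))\approx\omega(S(a'))$ for nearby $a,a'$ that make the transition between Carleson squares, pseudohyperbolic disks and the functions $f_{a,p}$ effective and uniform. A secondary point is that $\varphi_t$ need not be univalent, so one really must invoke the Carleson description for the genuine push-forward measure $\mu_t$ rather than a change-of-variables formula; this causes no trouble, as that description holds for arbitrary positive Borel measures on $\D$.
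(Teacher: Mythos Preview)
Your proposal is correct and follows essentially the same route as the paper: reduce strong continuity to uniform boundedness via Corollary~\ref{corap}, then use the norm equivalence $\|M_{m_t}C_{\varphi_t}\|_{A^p_\omega\to A^p_\omega}\approx\sup_{a\in\D}\int_{\D}|f_{a,p}(\varphi_t)|^p|m_t|^p\omega\,dA$ with $t$-independent constants. The only difference is cosmetic: the paper simply cites \cite[Theorem~1]{DLS} for this norm equivalence, whereas you sketch its proof via the push-forward measure $\mu_t$ and the Carleson-embedding description for regular weights; your explicit attention to the uniformity of the constants in $t$ is a worthwhile addition that the paper leaves implicit in its citation.
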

\begin{proof}
For any $t\in[0,\infty)$, it follows from \cite[Theorem 1]{DLS} that $M_{m_t}C_{\varphi_t}$ is bounded on $A^p_{\omega}$ if and only if
$$
\Delta_{\omega,p,t}:=\sup_{a\in\D}\int_{\D}|f_{a,p}(\varphi_t(z))|^p|m_t(z)|^p\omega(z)dA(z)<\infty.
$$
Moreover, $\|M_{m_t}C_{\varphi_t}\|\thickapprox\Delta_{\omega,p,t}$. Therefore, Corollary \ref{corap} shows that $(S_t)_{t\geq0}$
is strongly continuous on $A^p_{\omega}$ if and only if
$$
\sup_{0\leq t<1}\sup_{a\in\D}\int_{\D}|f_{a,p}(\varphi_t(z))|^p|m_t(z)|^p\omega(z)dA(z)<\infty.
$$
\end{proof}

Since the set of all polynomials is dense in the weighed Bergman space $A^p_{\omega}$ as long as $w$ is a radial weight, by a similar proof of as Corollary \ref{corhinfty} we have:

\begin{corollary}
Let $1\leq p<\infty$ and $\omega(z)$ be a regular weight. Suppose $\at$ is a semigroup of analytic self-maps of $\D$ and $\mt$ is a cocycle for $\at$. If
\begin{equation*}
\limsup_{t\rightarrow0^+}\|m_t(z)\|_{H^\infty}<\infty,
\end{equation*}
then the weighted composition semigroup $\St$ defined as \eqref{Def1} is strongly continuous on $A^p_{\omega}$.
\end{corollary}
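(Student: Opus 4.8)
The plan is to derive the uniform norm bound $\sup_{0\le t<1}\|S_t\|_{A^p_{\omega}}<\infty$ and then invoke Corollary \ref{corap} for $1<p<\infty$, treating $p=1$ separately exactly as in the proof of Corollary \ref{corhinfty}.

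First I would record two uniform estimates. The cocycle identity $m_{t+s}(z)=m_t(z)m_s(\varphi_t(z))$ makes $t\mapsto\|m_t\|_{H^\infty}$ submultiplicative; combined with the hypothesis $\limsup_{t\to0^+}\|m_t\|_{H^\infty}<\infty$, which yields a bound $M_0$ on $[0,\delta)$ for some $\delta>0$, splitting $t\in[0,1)$ into $\lceil 1/\delta\rceil$ equal pieces of length $<\delta$ gives $\sup_{0\le t<1}\|m_t\|_{H^\infty}\le M_0^{\lceil 1/\delta\rceil}<\infty$. Next, every analytic self-map of $\D$ induces a bounded composition operator on $A^p_{\omega}$ when $\omega$ is regular, with operator norm controlled by a quantity depending only on $|\varphi(0)|$; since $t\mapsto\varphi_t$ is continuous uniformly on compact subsets of $\D$ (a classical fact), $t\mapsto\varphi_t(0)$ is continuous on $[0,1]$, so $\sup_{0\le t\le1}|\varphi_t(0)|<1$ and hence $\sup_{0\le t<1}\|C_{\varphi_t}\|_{A^p_{\omega}}<\infty$. (Alternatively, this follows from submultiplicativity of $t\mapsto\|C_{\varphi_t}\|$ together with local boundedness near $t=0$.) Since $\|S_tf\|_{A^p_{\omega}}=\|m_t\,(f\circ\varphi_t)\|_{A^p_{\omega}}\le\|m_t\|_{H^\infty}\,\|C_{\varphi_t}f\|_{A^p_{\omega}}$, these two estimates give $\sup_{0\le t<1}\|S_t\|_{A^p_{\omega}}<\infty$. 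For $1<p<\infty$ Corollary \ref{corap} now yields strong continuity of $\St$ on $A^p_{\omega}$, and in particular on $A^2_{\omega}$.

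For the case $p=1$ I would repeat the density argument of Corollary \ref{corhinfty}. Because $\omega$ is integrable over $\D$, Hölder's inequality gives $\|h\|_{A^1_{\omega}}\le\big(\int_{\D}\omega\,dA\big)^{1/2}\|h\|_{A^2_{\omega}}$ for $h\in A^2_{\omega}$, and the polynomials are dense in both $A^1_{\omega}$ and $A^2_{\omega}$ since $\omega$ is radial. Given $f\in A^1_{\omega}$ and $\e>0$, choose a polynomial $g$ with $\|g-f\|_{A^1_{\omega}}<\e$ and, using strong continuity of $\St$ on $A^2_{\omega}$, choose $\delta<1$ with $\|S_tg-g\|_{A^2_{\omega}}<\e$ for $0<t<\delta$; then for such $t$,
$$
\|S_tf-f\|_{A^1_{\omega}}\le\big(\|S_t\|_{A^1_{\omega}}+1\big)\|g-f\|_{A^1_{\omega}}+\Big(\int_{\D}\omega\,dA\Big)^{1/2}\|S_tg-g\|_{A^2_{\omega}}\lesssim\e ,
$$
where $\sup_{0\le t<1}\|S_t\|_{A^1_{\omega}}<\infty$ by the previous paragraph. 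Hence $\St$ is strongly continuous on $A^1_{\omega}$ as well.

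The only genuinely non-formal point is the uniform bound $\sup_{0\le t<1}\|C_{\varphi_t}\|_{A^p_{\omega}}<\infty$: one must make sure that boundedness of composition operators on $A^p_{\omega}$ for regular $\omega$ is accompanied by a quantitative norm estimate that stays bounded as the semigroup parameter ranges over $[0,1)$. The remaining ingredients — submultiplicativity of $\|m_t\|_{H^\infty}$, density of polynomials, and the Hölder comparison between $A^1_{\omega}$ and $A^2_{\omega}$ — are routine and parallel the $H^p$ case.
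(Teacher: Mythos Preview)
Your proposal is correct and follows essentially the same route as the paper, which simply states that the proof is ``similar to Corollary~\ref{corhinfty}'' using density of polynomials in $A^p_\omega$; you have merely spelled out explicitly the uniform bound $\sup_{0\le t<1}\|S_t\|<\infty$ that the paper asserts without justification, and your flagged concern about the quantitative norm estimate for $C_{\varphi_t}$ on $A^p_\omega$ is a fair caveat that the paper likewise leaves implicit.
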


\section{Abelian Intertwiner and Weighted Composition Semigroup on $A^p_{\alpha}$}

Jafari, F., Slodkowski, Z. and Tonev, T. in \cite{JST} provided a characterization of strong continuity of weighted semigroups on the $H^p$ ($1\leq p<\infty$) through using abelian intertwiners of multiplication operator $M_z$ on $H^p$. Motivated by their results, in this section, we consider the similar problem on the weighted Bergman space $A^p_{\alpha}$ for $1\leq p<\infty, -1<\alpha<\infty$.

Let $B(X)$ be the set of all linear bounded operators on a Banach space $X$. We say that an operator $R$ belongs to the {commutant algebra} of $A\in B(X)$, denote by $A'$, if $R\in B(X)$ and $AR=RA$. We say that $T\in B(X)$ is an {abelian intertwiner} of $A$ if there exists an operator $B\in A'$ such that $TA=BT$.

The following lemma can be proved by a simple combination of \cite[Corollary 5]{RS} and \cite[Theorem 1]{zhao}.
\begin{lemma}\label{lemma2}
Let $1\leq p<\infty$ and $-1<\alpha<\infty$. The commutant algebra $M_z'$ of the operator $M_z$ on $A^p_{\alpha}$ is $\{M_f:f\in H^{\infty}\}$.
\end{lemma}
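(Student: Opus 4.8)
The plan is to characterize the commutant $M_z'$ of the multiplication operator $M_z$ on $A^p_\alpha$ by showing it consists exactly of the multiplication operators induced by bounded analytic symbols, mirroring the classical Hardy-space result. First I would check the easy inclusion: if $f\in H^\infty$, then $M_f$ is bounded on $A^p_\alpha$ (the standard weight is radial, so $\|M_f g\|_{A^p_\alpha}\le\|f\|_{H^\infty}\|g\|_{A^p_\alpha}$), and clearly $M_zM_f=M_fM_z$ since pointwise multiplication of analytic functions is commutative; hence $\{M_f:f\in H^\infty\}\subseteq M_z'$.

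For the reverse inclusion, suppose $R\in B(A^p_\alpha)$ satisfies $M_zR=RM_z$. The natural move is to test $R$ on the constant function $\mathbf{1}$ and set $f:=R\mathbf{1}\in A^p_\alpha$, so in particular $f\in H(\D)$. From $RM_z^n\mathbf{1}=M_z^nR\mathbf{1}$ we get $R(z^n)=z^n f(z)=M_f(z^n)$ for every $n\ge0$, i.e.\ $R$ and $M_f$ agree on all polynomials. Since polynomials are dense in $A^p_\alpha$ (the weight being radial) and $R$ is bounded, this forces $R=M_f$ as operators, once we know $M_f$ makes sense as a bounded operator. Boundedness of $R$ then gives $\|M_f p\|_{A^p_\alpha}=\|Rp\|_{A^p_\alpha}\le\|R\|\,\|p\|_{A^p_\alpha}$ for all polynomials $p$, which says $M_f$ extends to a bounded operator on $A^p_\alpha$; the content of \cite[Corollary 5]{RS} and \cite[Theorem 1]{zhao} (multiplier characterizations for these spaces) is precisely that this boundedness forces $f\in H^\infty$, completing the argument. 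So the proof is essentially a density-plus-cyclicity argument combined with the cited multiplier theorems.

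The main obstacle—and the reason the lemma is credited to those two references rather than proved from scratch—is the final implication that a pointwise multiplier of $A^p_\alpha$ must be a bounded function. For $p=2$ this is classical and elementary via reproducing kernels, but for general $p$ (and the weighted case) one genuinely needs the cited results; I would simply invoke them as stated rather than reprove them. A secondary, more routine point to be careful about is verifying that $\mathbf{1}$ is a cyclic vector for $M_z$ on $A^p_\alpha$, i.e.\ that polynomials are dense — this is standard for radial weights and I would cite \cite{Zhu} or \cite{PR1} for it.

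\begin{proof}
If $f\in H^\infty$, then since the standard weight $(\alpha+1)(1-|z|^2)^\alpha$ is radial and bounded behavior of $f$ gives $\|M_f g\|_{A^p_\alpha}\le\|f\|_{H^\infty}\|g\|_{A^p_\alpha}$, so $M_f\in B(A^p_\alpha)$; obviously $M_zM_f=M_fM_z$. Thus $\{M_f:f\in H^\infty\}\subseteq M_z'$.

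Conversely, let $R\in M_z'$ and put $f=R\mathbf{1}\in A^p_\alpha\subseteq H(\D)$. For every $n\ge0$, $R(z^n)=RM_z^n\mathbf{1}=M_z^nR\mathbf{1}=z^n f(z)$, so $R$ agrees with the linear map $g\mapsto fg$ on all polynomials. Since polynomials are dense in $A^p_\alpha$, boundedness of $R$ yields $\|fp\|_{A^p_\alpha}\le\|R\|\,\|p\|_{A^p_\alpha}$ for every polynomial $p$; hence $f$ is a pointwise multiplier of $A^p_\alpha$. By \cite[Corollary 5]{RS} together with \cite[Theorem 1]{zhao}, every such multiplier belongs to $H^\infty$, so $f\in H^\infty$ and, by density again, $R=M_f$. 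This proves $M_z'\subseteq\{M_f:f\in H^\infty\}$, and the lemma follows.
\end{proof}
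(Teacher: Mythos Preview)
Your proof is correct and follows exactly the route the paper indicates: the paper does not give a proof but simply states the lemma ``can be proved by a simple combination of \cite[Corollary 5]{RS} and \cite[Theorem 1]{zhao},'' and your argument spells out precisely this combination --- showing via the cyclic vector $\mathbf{1}$ and density of polynomials that any $R\in M_z'$ is a multiplication operator $M_f$, then invoking the cited multiplier characterizations to force $f\in H^\infty$. The only minor remark is that your pointwise identification $Rg=fg$ for general $g$ (not just polynomials) implicitly uses boundedness of point evaluations on $A^p_\alpha$, which is standard and harmless.
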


The following consequence is key to prove the second main result of this paper.

\begin{proposition}\label{pro}
Let $1\leq p<\infty$, $-1<\alpha<\infty$, and let $\varphi$ be an analytic self-map of $\D$. If $m(z)\in H^{\infty}$, then $T=M_mC_{\varphi}$ is an abelian intertwiner of the multiplication operator $M_z$ on $A_{\alpha}^p$.
\par Conversely, if $T\in B(A^p_{\alpha})$ is an abelian intertwiner of $M_z$, and further $(T1)(z)\in H^{\infty}$, then $T$ is a weighted composition operator of the form \eqref{Def2} satisfying $m(z)\in H^{\infty}$.
\end{proposition}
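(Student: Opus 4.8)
Below is a plan of attack; I would organize the argument as two implications, with Lemma~\ref{lemma2} (the identification $M_z'=\{M_f:f\in H^{\infty}\}$ on $A^p_{\alpha}$) as the workhorse in both directions.

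For the forward implication I would first record that $T=M_mC_{\varphi}$ is bounded on $A^p_{\alpha}$ when $m\in H^{\infty}$, since $\|M_mf\|_{A^p_{\alpha}}\le\|m\|_{H^{\infty}}\|f\|_{A^p_{\alpha}}$ and $C_{\varphi}$ is bounded on $A^p_{\alpha}$ for every analytic self-map $\varphi$. The intertwining is then immediate: for $f\in A^p_{\alpha}$,
$$
(TM_zf)(z)=m(z)\,\varphi(z)\,f(\varphi(z))=\varphi(z)\,(Tf)(z)=(M_{\varphi}Tf)(z),
$$
so $TM_z=M_{\varphi}T$; and since $\varphi$ maps $\D$ into $\D$ we have $\varphi\in H^{\infty}$, so $M_{\varphi}\in M_z'$ by Lemma~\ref{lemma2}. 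Hence $T$ is an abelian intertwiner of $M_z$ with $B=M_{\varphi}$.

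For the converse I would start from $T\in B(A^p_{\alpha})$ with $TM_z=BT$ for some $B\in M_z'$, and use Lemma~\ref{lemma2} to write $B=M_{\psi}$ with $\psi\in H^{\infty}$. Setting $m:=T1\in H^{\infty}$ (this is the hypothesis) and iterating the intertwining relation gives $TM_z^n=M_{\psi}^nT$, so evaluating at the constant $1$ yields $T(z^n)=m\,\psi^n$ for all $n\ge0$, and by linearity $T(q)=m\cdot(q\circ\psi)$ for every polynomial $q$. Next I would check that $\psi$ is an analytic self-map of $\D$: if $m\equiv0$ then $T$ kills all polynomials, so $T=0$, trivially of the form \eqref{Def2} with $m\equiv0$; if $m\not\equiv0$, then from $\|T(z^n)\|_{A^p_{\alpha}}\le\|T\|\,\|z^n\|_{A^p_{\alpha}}\le\|T\|$ and boundedness of the point evaluation at some $z_0$ with $m(z_0)\ne0$ one gets $|m(z_0)|\,|\psi(z_0)|^n\le C\|T\|$ for all $n$, forcing $|\psi(z_0)|\le1$; since $\{m\ne0\}$ is dense in $\D$, $|\psi|\le1$ on $\D$, whence the open mapping theorem forces $\psi(\D)\subset\D$ unless $\psi$ is a unimodular constant, a case excluded because then $T(z^N)=\psi^Nm$ has norm $\|m\|>0$ for all $N$ while $z^N\to0$ in $A^p_{\alpha}$, contradicting continuity of $T$. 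Finally, to pass from polynomials to all of $A^p_{\alpha}$ I would use density of the polynomials in $A^p_{\alpha}$ (the weight being radial) together with boundedness of point evaluations: for $f\in A^p_{\alpha}$ and polynomials $q_k\to f$ in norm, $q_k\to f$ uniformly on compact subsets of $\D$, hence $q_k\circ\psi\to f\circ\psi$ pointwise on $\D$, while $Tq_k\to Tf$ in norm and therefore pointwise; equating the two limits gives $Tf(z)=m(z)f(\psi(z))$ on $\D$, i.e.\ $T=M_mC_{\psi}$ with $m\in H^{\infty}$, which is of the form \eqref{Def2}.

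The step I expect to be the genuine obstacle is showing that the multiplier $\psi$ supplied by Lemma~\ref{lemma2} is an honest analytic self-map of $\D$ (not merely a bounded analytic function), and simultaneously disposing of the degenerate cases $m\equiv0$ and $\psi$ a constant of modulus one; only after this is settled does the statement ``$T$ is of the form \eqref{Def2}'' even parse. Everything else — the intertwining computation, the passage to monomials by iteration, and the density/point-evaluation approximation — should be routine.
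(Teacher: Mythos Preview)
Your proposal is correct, and your converse argument follows a genuinely different (and in some respects cleaner) route than the paper's.

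In the paper, once $\psi=\varphi\in H^{\infty}$ with $TM_z=M_{\varphi}T$ is fixed, the identification $Tf=m\cdot(f\circ\varphi)$ is obtained for \emph{every} $f\in A^p_{\alpha}$ at once by a difference-quotient trick: for $a\in\D$ one sets $F(z)=\dfrac{f(z)-f(\varphi(a))}{z-\varphi(a)}$ (extended analytically at $\varphi(a)$), notes $F\in A^p_{\alpha}$, writes $f-f(\varphi(a))=M_zF-\varphi(a)F$, applies $T$, and evaluates at $z=a$ to get $(Tf)(a)=f(\varphi(a))\,(T1)(a)$. No polynomial approximation is used. To exclude $\varphi\equiv\zeta\in\T$, the paper argues separately: for $1<p<\infty$ it invokes reflexivity to extract a weakly convergent subsequence of $\{z^nf\}$ and reach a contradiction, while for $p=1$ it observes that $\{(z-\zeta)f:f\in A^1_{\alpha}\}$ is dense, forcing $T=0$.

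Your approach instead computes $T(z^n)=m\psi^n$ by iteration, extends to polynomials by linearity, and then to all of $A^p_{\alpha}$ via density of polynomials together with boundedness of point evaluations; and your exclusion of the unimodular-constant case via $\|z^N\|_{A^p_{\alpha}}\to 0$ handles all $1\le p<\infty$ in one stroke, avoiding the paper's case split. The trade-off is that the paper's difference-quotient argument yields the representation directly for every $f$ without any approximation step, whereas your route is more elementary but leans on polynomial density.
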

\begin{proof}
\par
If $T=M_mC_{\varphi}$, for every $f\in A^p_{\alpha}$, we have
\begin{equation*}
\begin{split}
(TM_zf)(z)&=(T(zf))(z)=(M_mC_{\varphi}(zf))(z)\\
&=m(z)\varphi(z)(f\circ\varphi)(z)=\varphi(z)m(z)(f\circ\varphi)(z)\\
&=\varphi(z)(M_mC_{\varphi}f)(z)=(M_{\varphi}Tf)(z).
\end{split}
\end{equation*}
So $TM_z=M_{\varphi}T$. In addition, for every $f\in A^p_{\alpha}$, $M_zM_{\varphi}f=z\varphi f=\varphi zh=M_{\varphi}M_zf$, which indicates that $M_{\varphi}\in M_z'$. Since $m(z)\in H^{\infty}$, $T$ is bounded on $A^p_{\alpha}$. Hence, $T$ is an abelian intertwiner of $M_z$.
\par
Conversely, if $T\in A^p_{\alpha}$ is an abelian intertwiner of $M_z$, the proof is trivial when $T=0$. Now, suppose $T\neq0$. By Lemma \ref{lemma2}, there exists a $\varphi\in H^{\infty}$ such that $TM_z=M_{\varphi}T$. By induction, it is elementary to see that $TM_z^n=M_{\varphi}^nT$, $n\in\mathbb{N}$. Since $T\neq0$, there is an $f\in A^p_{\alpha}$ satisfying $g=Tf\neq0$, and hence $TM_z^nf=M_{\varphi}^nTf=M_{\varphi}^ng$. Moreover, we have
$$
\|M_{\varphi}^nTf\|_{A^p_{\alpha}}=\|TM_z^nf\|_{A^p_{\alpha}}\leq\|T\|\|M_z^nf\|_{A^p_{\alpha}}\leq
\|T\|\|f\|_{A^p_{\alpha}},\quad \mbox{for} ~n\in\mathbb{N}.
$$
Applying \cite[Theorem 4.14]{Zhu}, we get
\begin{equation*}
\begin{split}
\|T\|\|f\|_{A^p_{\alpha}}\geq\|M_{\varphi}^nTf\|_{A^p_{\alpha}}&=\|\varphi^nTf\|_{A^p_{\alpha}}\\
&\geq(1-|z|^2)^{\f{2+\alpha}{p}}|\varphi(z)|^n|Tf(z)|,\quad z\in\D.
\end{split}
\end{equation*}
Therefore, for each $z\in\D$, we have
$$
|\varphi(z)|\leq\left(\frac{\|T\|\|f\|_{A^p_{\alpha}}}{|g(z)|(1-|z|^2)^{\frac{2+\alpha}{p}}}\right)^{\frac{1}{n}}.
$$
By letting $n$ go to $\infty$, we deduce that $|\varphi(z)|\leq1$, $z\in\D$ a.e. Since $\varphi\in H^\infty$, it follows from maximum modulus principle that $|\varphi(z)|\leq1$ for all $z\in\D$. Moreover, open mapping theorem indicates that either $|\varphi(z)|<1$  for all $z\in\D$ or $\varphi(z)\equiv \zeta$ for a fixed $\zeta\in\T$. However, the second case cannot happen.
\vskip 0.3cm
{\bf Claim}.~~$|\varphi(z)|<1$  for all $z\in\D$.
\begin{proof}
Suppose not. Then there exists a $\zeta\in\T$ such that $\varphi\equiv \zeta$. When $1<p<\infty$, since $T\neq0$, there is an $f\in A^p_{\alpha}$ such that $g=Tf\neq0$.
Consider the bounded sequence $\{h_n(z)\}$ in $A^p_{\alpha}$, where $h_n(z)=z^nf(z)$. Since $A^p_{\alpha}$ is a reflexive space, there is a subsequence $\{h_{n_k}\}$ such that it converges weakly  in $A^p_{\alpha}$. Montel's Theorem tells us that there is a subsequence of $\{h_{n_k}\}$  pointwisely convergent to 0. Therefore, $h_{n_k}(z)$ is convergent weakly to 0. Nevertheless, from the above statement, we have
$TM_z^{n_k}(f)(z)=\zeta^{n_k}g(z)$. That is, we obatin
$$
|g(z)|=\lim_{k\rightarrow\infty}|\zeta^{n_k}g(z)|=\lim_{k\rightarrow\infty}|TM_z^{n_k}(f)(z)|=\lim_{k\rightarrow\infty}
|T(h_{n_k})(z)|=|T(0)(z)|=0.
$$
This is a contradiction.
\par
When $p=1$, we have $TM_z=M_{\varphi}T=\zeta T$. Since $T\in B(A^1_{\alpha})$, we get for any $f\in A^1_{\alpha}$,
$$
T(zf-\zeta f)=0.
$$
But take into account the fact that $A=\{zf-\zeta f:f\in A^1_{\alpha}\}$ is dense in $A^1_{\alpha}$, which indicates that $T$ is not only vanishing on $A$ but also on the whole $A^1_{\alpha}$. This is contradicted to the assumption $T\neq0$. Thus the claim is proved.
\end{proof}
\par
Now, for $1\leq p<\infty$ and $-1<\alpha<\infty$, if $T\in A^p_{\alpha}$ is an abelian intertwiner of $M_z$, then there exists an analytic function $\varphi$ from $\D$ to $\D$ such that $TM_z=M_{\varphi}T$. Let $a\in\D$ and $f\in A^p_{\alpha}$. Consider the function
\begin{equation*}
F(z)=
\left\{
\begin{aligned}
&\f{f(z)-f(\varphi(a))}{z-\varphi(a)}\quad z\neq\varphi(a),\\
&f'(\varphi(a))\quad z=\varphi(a).
\end{aligned}
\right.
\end{equation*}
Since $\varphi(a)\subseteq\D$, $F(z)\in A^p_{\alpha}$. Moreover,
$$
f-f(\varphi(a))=M_zF-\varphi(a)F.
$$
Accordingly,
$$
Tf-f(\varphi(a))T1=TM_zF-\varphi(a)TF=M_{\varphi}TF-\varphi(a)TF.
$$
Then
$$
(Tf)(a)=f(\varphi(a))T1(a)=(M_{T1(a)}C_{\varphi}f)(a):=(M_mC_{\varphi}f)(a),
$$
where $m(z)=T1(z)$. Since $a$ is arbitrary in $\D$, $Tf=M_mC_{\varphi}$. Hence $T$ is a weighted composition operator.
\end{proof}

We end up this paper by presenting the second main result, which gives a new result concerning the equivalence of the strongly continuous weighted composition semigroup on $A^p_{\alpha}$.

\begin{theorem}\label{thm2}
Let $1\leq p<\infty$, $-1<\alpha<\infty$. If $(T_t)_{t\geq0}$ is an operator semigroup on $A^p_{\alpha}$ satisfying
$$
\sup_{0\leq t<1}\|T_t\|<\infty,
$$
then $(T_t)_{t\geq0}$ is a weighted composition semigroup in the form of \eqref{Def1} if and only if each $T_t$ is an abelian intertwiner of $M_z$.
\end{theorem}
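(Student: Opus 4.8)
The plan is to establish the two implications separately, with Proposition~\ref{pro} supplying the structure of each $T_t$ and Corollary~\ref{corap} supplying the one ingredient — continuity — that the purely algebraic part does not see.

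\emph{The ``only if'' direction.} Suppose $(T_t)_{t\ge0}=\St$ is of the form \eqref{Def1}, so $S_tf=m_t\cdot(f\circ\varphi_t)$ with $\at$ a semigroup of analytic self-maps of $\D$ and $\mt$ a cocycle for it. Each $\varphi_t$ is a self-map of $\D$, hence $\varphi_t\in H^\infty$ with $\|\varphi_t\|_{H^\infty}\le1$, so $M_{\varphi_t}\in M_z'$ by Lemma~\ref{lemma2}. Exactly the one-line computation in the first half of Proposition~\ref{pro} gives $S_tM_z=M_{\varphi_t}S_t$; since $S_t$ is bounded (it belongs to an operator semigroup), $T_t=S_t$ is an abelian intertwiner of $M_z$. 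This direction needs no assumption on $m_t$ beyond $m_t\in H(\D)$.

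\emph{The ``if'' direction.} Assume every $T_t$ is a bounded abelian intertwiner of $M_z$ and $\sup_{0\le t<1}\|T_t\|<\infty$. The argument in the converse half of the proof of Proposition~\ref{pro} in fact shows — the hypothesis $T1\in H^\infty$ there being used only to deduce $m\in H^\infty$, not to deduce the representation — that any bounded abelian intertwiner $T$ of $M_z$ on $A^p_{\alpha}$ satisfies $T=M_{T1}C_\varphi$ for some analytic self-map $\varphi$ of $\D$. Applying this to each $T_t$ produces analytic self-maps $\varphi_t\colon\D\to\D$ with $T_t=M_{m_t}C_{\varphi_t}$, where $m_t:=T_t1\in A^p_{\alpha}\subset H(\D)$; when $T_t=0$ (equivalently $m_t\equiv0$) the map $\varphi_t$ is not pinned down, and one simply sets $\varphi_t=\mathrm{id}$. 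The algebraic relations then come out by testing on $f=1$ and $f(z)=z$: from $T_0=I$ one gets $m_0=1$ and $\varphi_0=\mathrm{id}$; from $T_{t+s}f=T_t(T_sf)=m_t\cdot(m_s\circ\varphi_t)\cdot\bigl(f\circ(\varphi_s\circ\varphi_t)\bigr)$ applied to $f=1$ one reads off the cocycle identity $m_{t+s}=m_t\cdot(m_s\circ\varphi_t)$, and applying it to $f(z)=z$ and cancelling the analytic factor $m_{t+s}$ (not identically zero when $T_{t+s}\ne0$) yields $\varphi_{t+s}=\varphi_s\circ\varphi_t$ on $\D$ by the identity theorem. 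Thus $\at$ satisfies (i)--(ii) of an analytic semigroup of $\D$ and $\mt$ satisfies the cocycle identities.

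It remains to verify (iii): $\varphi_t(z)\to z$ for each $z\in\D$ as $t\to0^+$ — and this is where the bound $\sup_{0\le t<1}\|T_t\|<\infty$ must finally be used, since without it the $\varphi_t$ could be, say, rotations by a discontinuous additive angle. For $1<p<\infty$ the standard weight $(\alpha+1)(1-|z|^2)^\alpha$ is regular, so Corollary~\ref{corap} gives that $(T_t)_{t\ge0}$ is strongly continuous; hence $T_t1\to1$ and $T_tf\to f$ for $f(z)=z$ in $A^p_{\alpha}$, and by boundedness of the point evaluations these hold pointwise, so $m_t(z)\to1$ and $m_t(z)\varphi_t(z)\to z$ for every $z$. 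Fixing $z$ and using $m_t(z)\to1\ne0$, one has $m_t(z)\ne0$ for small $t$ and $\varphi_t(z)=\bigl(m_t(z)\varphi_t(z)\bigr)/m_t(z)\to z$, which is (iii); then $\at$ is an analytic semigroup, $\mt$ a cocycle for it, each $S_t=T_t$ is bounded, and $(T_t)_{t\ge0}$ has the form \eqref{Def1}. The case $p=1$ is the main obstacle, because Corollary~\ref{corap} fails there (cf.\ the Remark following it): one must instead recover the pointwise limits $m_t(z)\to1$ and $m_t(z)\varphi_t(z)\to z$ directly, for instance by using weak-$*$ compactness of norm-bounded sets in $A^1_{\alpha}$ (through its predual, so that weak-$*$ limits are pointwise limits) together with the semigroup identities above to identify the limit, or by an approximation argument on the dense set of polynomials combined with the uniform bound on $\|T_t\|$.
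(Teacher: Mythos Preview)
Your argument follows the paper's line closely: both directions run through Proposition~\ref{pro}, the cocycle and flow identities are read off by evaluating $T_{t+s}=T_tT_s$ on $1$ and on $\mathrm{id}$, and condition~(iii) for $\at$ is extracted from strong continuity via Corollary~\ref{corap}. Your derivation of $\varphi_t(z)\to z$ directly from the pointwise limits $T_t1\to1$ and $T_t\,\mathrm{id}\to\mathrm{id}$ is a shade more direct than the paper's detour through joint continuity of $(t,z)\mapsto\varphi_t(z)$ and Vitali; on the other hand the paper also argues that every $m_t$ is zero-free on $\D$, which you bypass (harmlessly, since the cocycle definition does not require it, and your identity-theorem cancellation only needs $m_{t+s}\not\equiv0$).

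You are right to flag the case $p=1$: Corollary~\ref{corap} is stated only for $1<p<\infty$, and the paper's own proof invokes it for all $1\le p<\infty$ without supplying the missing argument, so the gap you identify is already present in the paper's proof.
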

\begin{proof}
The necessity is easy to obtain by using the first part of Proposition \ref{pro}. Conversely, since
$$
\sup_{0\leq t<1}\|T_t\|<\infty,
$$
Corollary \ref{corap} implies that $(T_t)_{t\geq0}$ is strongly continuous on $A^p_{\alpha}$. If each $T_t$ is an abelian intertwiner of $M_z$, then there is an analytic $\varphi_t$ from $\D$ to $\D$ such that for every $f\in A^p_{\alpha}$, $T_tf=M_{m_t}C_{\varphi_t}f$ by the proof of Proposition \ref{pro}. Moreover, $m_t=T_t1$. If $f(z)=z$, then $\varphi_t(z)=\f{T_tz}{T_t1(z)}$. Hence $\varphi_0$ is the identity map. Since $(T_t)_{t\geq0}$ is an operator semigroup, we have $m_0\equiv1$ and for every $f\in A^p_{\alpha}$, $T_{t+s}f=T_tT_sf$. Consequently,
\begin{equation*}
\begin{split}
&T_{t+s}f=M_{m_{t+s}}C_{\varphi_{t+s}}f=m_{t+s}f\circ\varphi_{s+t}=T_tT_sf\\
&=M_{m_t}C_{\varphi_t}(m_sf\circ\varphi_s)=m_t(m_s(f\circ\varphi_s))\circ\varphi_t=m_t(m_s\circ\varphi_t)f\circ
(\varphi_s\circ\varphi_t).
\end{split}
\end{equation*}
Accordingly,
$$
m_{t+s}f\circ\varphi_{s+t}=m_t(m_s\circ\varphi_t)f\circ(\varphi_s\circ\varphi_t).
$$
Let $f=1$. We conclude that $m_{s+t}=m_t(m_s\circ\varphi_t)$. Also, we claim that $m_t(z)\neq0$. Otherwise, there is the smallest $\tau>0$ and $z_0\in\D$, such that $m_{\tau}(z_0)=0$. By the identity $m_{s+t}=m_t(m_s\circ\varphi_t)$,
$$
m_{\tau}(z)=m_s(z)m_{\tau-s}(\varphi_{\tau}(z)),\quad 0<s<\tau.
$$
Together with the choice of $\tau$, we deduce that $m_{\tau-s}(\varphi_{\tau}(z_0))=0$ for $0<s<\tau$, which implies that $m_t(\varphi_{\tau}(z_0))=0$. Let $t\rightarrow0$. We get the contradiction of the fact $m_0(z)\equiv1$. Furthermore,
\begin{align*}
m_{t+s}(z)\varphi_{t+s}(z)&=T_{t+s}z=T_t(m_s(z)\varphi_s(z))\\
&=m_t(z)m_s(\varphi_t(z))\varphi_s(\varphi_t(z))=
m_{t+s}(z)\varphi_s(\varphi_t(z)).
\end{align*}
Thus, $m_t(z)\neq0$ implies $\varphi_{t+s}(z)=\varphi_s(\varphi_t(z))$. In addition, since $(T_t)_{t\geq0}$ is strongly continuous, it is easy to deduce that $(t,z)\mapsto\varphi_{t}(z)$ is continuous on $[0,\infty)\times \mathbb{D}$. By the Vitali's Theorem, it follows that for each $z\in\mathbb{D}$, $\varphi_{t}(z)\rightarrow z$, as $t\rightarrow0^+$. That is, we prove that $\at$ is a semigroup of analytic functions from $\D$ to $\D$. Therefore, the above results results: $m_0\equiv0$, $m_t(z)=(T1)(z)$ and $m_{s+t}=m_t(m_s\circ\varphi_t)$ imply that $(m_t(z))_{t\geq0}$ is a cocycle for $\at$. Moreover,
$$
T_tf(z)=m_t(z)(f\circ\varphi)(z), \quad f\in A^p_{\alpha},
$$
where $m_t(z)=(T_t1)(z)$. The proof is complete.
\end{proof}

\end{document}